\providecommand{\U}[1]{\protect\rule{.1in}{.1in}}
\newtheorem{theorem}{Theorem}[section]
\newtheorem*{acknowledgement*}{Acknowledgement}
\newtheorem{corollary}[theorem]{Corollary}
\newtheorem{definition}[theorem]{Definition}
\newtheorem{lemma}[theorem]{Lemma}
\newtheorem{proposition}[theorem]{Proposition}
\newtheorem{remark}[theorem]{Remark}
\title[Rigidity and non-existence results for collapsed translators]{Rigidity and non-existence results \\for collapsed translators}
\author[D. Impera]{Debora Impera}
\address[Debora Impera]{Dipartimento di Scienze Matematiche "Giuseppe Luigi Lagrange", Politecnico di Torino, Corso Duca degli Abruzzi, 24, Torino, Italy, I-10129}
\email{debora.impera@polito.it}
\author[N.M. M{\o}ller]{Niels Martin M{\o}ller}
\address[Niels Martin M{\o}ller]{
  Copenhagen Center for Geometry and Topology (GeoTop), Department of Mathematical Sciences,
  University of Copenhagen,
  DK-2100 Copenhagen, Denmark.
}
\email{nmoller@math.ku.dk}
\author[M. Rimoldi]{Michele Rimoldi}
\address[Michele Rimoldi]{Dipartimento di Scienze Matematiche "Giuseppe Luigi Lagrange", Politecnico di Torino, Corso Duca degli Abruzzi, 24, Torino, Italy, I-10129}
\email{michele.rimoldi@polito.it}
\thanks{N.M. M\o{}ller was partially supported by DFF Sapere Aude 7027-00110B, Carlsberg Foundation CF21-0680 and Danmarks Grundforskningsfond CPH-GEOTOP-DNRF151. D. Impera and M. Rimoldi were both partially supported by INdAM-GNSAGA, and they acknowledge partial support by PRIN 2017 Real and Complex Manifolds: Topology, Geometry and holomorphic dynamics 2017JZ2SW5, and PRIN 2022 Real and Complex Manifolds: Geometry and Holomorphic Dynamics 2022AP8HZ9.}
\subjclass[2020]{53A10, 53E10 (49Q05, 53C42).}
\keywords{Mean curvature flow, translators, self-translating solitons, entropy, minimal surfaces, weighted manifolds, parabolicity}
\begin{document}
\maketitle

\begin{abstract}
We prove a rigidity result for mean curvature self-translating solitons, characterizing the grim reaper cylinder as the only finite entropy self-translating 2-surface in $\mathbb{R}^3$ of width $\pi$ and bounded from below. The proof makes use of parabolicity in a weighted setting applied to a suitable universally $L$-superharmonic function defined on translators in such slabs.
\end{abstract}

\section{Introduction}
The mean curvature flow has been an active field since the 1970s (for some key references, see e.g. \cite{T78}, \cite{B78}, \cite{H84}, \cite{GH86}, \cite{G87}, \cite{H95}, \cite{W02}, \cite{CM11}, \cite{CM12}). Much of the work has been focused on singularity models for the flow, i.e. the self-similar solitons.  

The oldest known nontrivial complete embedded soliton is Calabi's self-translating curve $\Gamma$ in $\mathbb{R}^2$ (see Grayson \cite{G87}), also sometimes referred to as the ``grim reaper'', which appeared explicitly as early as 1956 in the paper \cite{M56}. This special solution can be seen as somewhat analogous to Hamilton's cigar soliton \cite{H88} in the Ricci flow, where G. Perelman's ``no cigar'' theorem \cite{perelman} is a central part of the theory.

%%%%%%

Self-translating solitons arise naturally in the study of ``Type II'' singularities in the mean curvature flow. Huisken and Sinestrari \cite{HS99a} showed, based on Hamilton's results in \cite{H95}, that blow-up limit flows at Type II singularities of mean convex mean curvature flows are complete, self-translating solitons of the form $\Sigma^k \times \mathbb{R}^{n-k}$, where $\Sigma^k$ denotes a convex translator in $\mathbb{R}^{k+1}$, with $k = 1, \dots, n$. For example, a so-called grim reaper cylinder (aka. grim plane)
\begin{equation}\label{def:grim}
\Gamma \times \mathbb{R} = \{\left(x_1,x_2,-\log(\sin x_1)\right):\: x_1\in (0,\pi)\}.
\end{equation}
We note that, in some situations, such a reaper cylinder can then be ruled out as a possible singularity model (for more about the mean convex case see also e.g. \cite{HS99b}, \cite{Wh00}, \cite{Wh03}, \cite{SW09}, \cite{A12}, and \cite{HK17}).

In this paper, we normalize so that a translator $\Sigma^{2}\subseteq \mathbb{R}^{3}$ means a smooth, complete connected surface in $\mathbb{R}^{3}$, which evolves according to the mean curvature flow as unit speed translation in the direction of the positive $x_{3}$-axis. Therefore $\Sigma^{2}$ satisfies the following equation
\begin{equation}\label{Eq_Transl}
    \mathbf{H}=e_{3}^{\bot},
\end{equation}
where $(\cdot)^{\bot}$ denotes the projection on the normal bundle of $\Sigma$. Note that here we are using the conventions (which differ up to signs from what some other authors use) that
\[
 \mathbf{H}=H\nu=\mathrm{tr}(\mathbf{A}),
\]
where $\nu$ is the unit normal vector of $\Sigma$, $H$ is the mean curvature function, and the second fundamental form $\mathbf{A}$ of the immersion is defined as the generalized Hessian of the immersion.
When possible, we will choose the unit normal $\nu$ to be inward pointing, and we will say that $\Sigma$ is mean convex if it holds that $H\geq0$ (as e.g. for the round sphere). See Section \ref{Sec_Examples} for some examples, with references.

J. Spruck and L. Xiao proved in \cite{SX} that every complete mean convex translator in $\mathbb{R}^{3}$ is actually convex. X. J. Wang proved in \cite{W} that any convex translator in $\mathbb{R}^{n+1}$ which is not an entire graph lies in a vertical slab of finite width (i.e. is collapsed). Hence, using the grim reaper cylinder as a barrier, the following result has already been shown.

\begin{proposition}[\cite{HIMW}, \cite{BLT20}]\label{Prop_RigGrimGraph}
Let $\Sigma^2\subseteq\mathbb{R}^{3}$ be a translator, which is a graph $\{(x_1,x_2,u(x_1,x_2))\}$ and contained in a vertical slab of width $\pi$. Then, up to rigid motions, $\Sigma$ is a standard grim reaper cylinder $\Gamma\times\mathbb{R}$.
\end{proposition}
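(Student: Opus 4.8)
The plan is to run a barrier (sliding) argument against the vertical translates of the grim reaper cylinder, using the strong maximum principle for the quasilinear elliptic equation satisfied by translating graphs. After a rigid motion I would place the slab as $\{0\le x_1\le\pi\}$ and write $\Sigma=\operatorname{graph}(u)$ for a function $u\colon\Omega\to\mathbb{R}$ on the projection $\Omega\subseteq(0,\pi)\times\mathbb{R}$ to the $x_1x_2$-plane. The translator equation \eqref{Eq_Transl} then becomes the quasilinear elliptic PDE
\[
\operatorname{div}\!\left(\frac{Du}{\sqrt{1+|Du|^2}}\right)=\frac{1}{\sqrt{1+|Du|^2}},
\]
and a direct computation shows that the grim reaper function $g(x_1,x_2)=-\log\sin x_1$ — whose graph is $\Gamma\times\mathbb{R}$ — solves the same equation, as does every vertical translate $g+c$, $c\in\mathbb{R}$.

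First I would pin down the domain. Since $\Sigma$ is complete and boundaryless, the graph cannot terminate over an interior boundary point of $\Omega$, so the tangent planes must become vertical as one approaches $\partial\Omega$. Combined with the convexity of the projected domain of a complete translating graph and with the width being exactly $\pi$, this should force $\Omega=(0,\pi)\times\mathbb{R}$, the full open strip, with $u\to+\infty$ at both walls, matching the asymptotic shape of $g$.

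Next I would compare $u$ with the translates $g+c$. Setting $h=u-g$, since both functions solve the same equation the mean value theorem yields a homogeneous linear second-order elliptic equation $a^{ij}h_{ij}+b^{i}h_{i}=0$ for $h$; crucially there is no zeroth-order term, because the operator depends on the unknown only through its first and second derivatives. Hence $h$ obeys the strong maximum principle. I would then slide the grim reaper vertically, considering $c_0=\inf\{c:\ g+c\ge u \text{ on }\Omega\}$; at this optimal height one has $g+c_0\ge u$ with the gap $(g+c_0)-u$ vanishing somewhere. If the gap vanishes at an interior point of $\Omega$, the strong maximum principle forces $h$ to be constant, so $u=g+c_0$ and $\Sigma$ is the grim reaper cylinder, as claimed.

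The crux — and the step I expect to be the real obstacle — is the boundary analysis needed to legitimize the sliding, i.e. to rule out the gap vanishing only in a limit. Two issues must be controlled. At the walls $x_1=0,\pi$ both $u$ and $g$ blow up, so I must show $h=u-g$ stays bounded there, i.e. that $u$ blows up at the same logarithmic rate as the grim reaper; this is exactly where the width being equal to $\pi$ is essential, since $\pi$ is the critical (minimal) width admitting a grim reaper asymptotic, and it requires a boundary gradient estimate for translating graphs. The second issue is the non-compactness in the $x_2$-direction, where the contact point could escape to infinity. I would handle this in the standard way by translating $\Sigma$ along $x_2$, extracting a smooth limit translator via interior estimates, and applying the maximum principle to the limit, after which the $x_2$-translation invariance of $g$ transfers the conclusion back to $\Sigma$.
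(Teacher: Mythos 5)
The paper does not actually prove this proposition: it is imported from \cite{HIMW} and \cite{BLT20}, where it follows from the full classification of complete graphical translators in $\mathbb{R}^3$ (tilted grim reaper cylinders, $\Delta$-wings, and the bowl soliton), of which only the untilted grim reaper cylinder fits in a vertical slab of width $\pi$. Your direct sliding argument is therefore a different route, and its skeleton is sound as far as it goes: the difference of two solutions of an equation of the form $F(Du,D^2u)=0$ does satisfy a linear elliptic equation with no zeroth-order term, so the strong maximum principle applies to $h=u-g$. But, as you yourself flag, the decisive step is left open, and it is not a routine technicality.

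Concretely, two things are missing. First, to even begin the sliding you need $\sup_\Omega(u-g)<+\infty$, i.e.\ that $u=-\log\sin x_1+O(1)$ uniformly in $x_2$; this wall asymptotic is precisely the hard analytic input, and it must use the hypothesis that the width equals $\pi$ in an essential way, since in wider slabs the tilted grim reapers and $\Delta$-wings show that a complete translating graph over a strip need not have the vertical grim reaper profile. A generic ``boundary gradient estimate'' will not produce it. Second, your compactness fix only addresses escape of the near-contact points in the $x_2$-direction; the dangerous scenario is escape toward the walls $x_1\to 0,\pi$, where both $u$ and $g$ blow up and the gap can tend to $0$ with no interior contact --- translating in $x_2$ and passing to a limit merely reproduces the same problem for the limit graph. (Even granting an interior contact for the limit, transferring $u_\infty\equiv g+c_0$ back to $u\equiv g+c_0$ requires a further argument, e.g.\ via convexity of $u$, which you do not supply.) Finally, note that pinning down $\Omega=(0,\pi)\times\mathbb{R}$ already consumes several substantial external results (Spruck--Xiao convexity \cite{SX}, the classification of domains of complete translating graphs, and the lower bound $\geq\pi$ on the width of a strip domain), the last of which is itself usually established by exactly the kind of barrier analysis you are trying to set up. So the proposal is a reasonable plan, but its core --- the asymptotics at the walls that legitimize the sliding --- is a genuine gap.
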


Namely, D. Hoffman, T. Ilmanen, F. Mart\'in, and B. White have actually obtained in \cite{HIMW} the full classification of complete graphical (into the translation direction, equivalently mean convex) translators in $\mathbb{R}^{3}$. These are either tilted grim reaper cylinders, $\Delta$-wings, or the bowl soliton (the unique entire graph over $\mathbb{R}^2$). Both the (non-trivially) tilted grim reaper cylinders and the $\Delta$-wings lie in vertical slab regions of widths strictly greater than $\pi$.
\medskip

In this paper we do not assume such graphicality (i.e. mean convexity) of the translators. This more general setting opens the possibility for a much wider variety of examples. For an overview of known examples of translators which are confined to some vertical slab, we refer to Section \ref{Sec_Examples} below.
\medskip

Our main result is the following rigidity theorem for the grim reaper cylinder among relatively low-width translators confined in possibly slanted upper halfspaces.

\begin{theorem}\label{Thm_RigB}
Let $\Sigma^2\subseteq \mathbb{R}^{3}$ be a translator with finite entropy contained in a vertical slab of width $\lambda\pi$ for some $\lambda\geq 1$. If $\Sigma\subseteq\left\{\Big(1-\frac{\lambda^2}{2}\Big)x_3+\alpha x_2\geq c\right\}$ for some $\alpha^2\geq\lambda^2-1$ and $c\in\mathbb{R}$, then necessarily $\lambda=1$, $\alpha=0$, and $\Sigma$ is a grim reaper cylinder $\Gamma\times\mathbb{R}$.
\end{theorem}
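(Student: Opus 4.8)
The plan is to read the translator condition \eqref{Eq_Transl} as a weighted (drift) minimal condition and to run a maximum principle at infinity against an explicit barrier built from the slab. Write $L=\Delta_\Sigma+\langle e_3,\nabla(\cdot)\rangle$ for the drift Laplacian of the weighted surface $(\Sigma,g,e^{x_3}\,d\mathcal{H}^2)$. Since $\Delta_\Sigma\vec{x}=\mathbf{H}=e_3^{\bot}$, the coordinate functions satisfy
\[ Lx_1=Lx_2=0,\qquad Lx_3=1. \]
After a translation in $x_1$ I normalize the slab to $\{|x_1|\le \lambda\pi/2\}$ and set $w=\cos(x_1/\lambda)\ge 0$, which is positive in the interior and vanishes on the walls. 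A direct computation using $Lx_1=0$ gives
\[ Lw=-\lambda^{-2}\,|\nabla x_1|^2\,w\le 0, \]
so $w$ is an $L$-superharmonic function defined on \emph{every} translator in the slab --- the promised universally $L$-superharmonic barrier. On the grim reaper cylinder one has $w=\cos x_1=e^{-x_3}$, which motivates the construction.

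Next I would encode the half-space hypothesis into a companion function whose constancy detects the reaper. Set
\[ V=\Big(1-\tfrac{\lambda^2}{2}\Big)x_3+\alpha x_2+\Big(1-\tfrac{\lambda^2}{2}\Big)\lambda^2\log\cos(x_1/\lambda). \]
Using $L\log w=-\lambda^{-2}\sec^2(x_1/\lambda)\,|\nabla x_1|^2$ together with $Lx_3=1$ and $Lx_2=0$, one finds the clean identity
\[ LV=\Big(1-\tfrac{\lambda^2}{2}\Big)\Big(1-\frac{|\nabla x_1|^2}{\cos^2(x_1/\lambda)}\Big). \]
Two features make $V$ decisive. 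First, by the half-space condition the linear part $(1-\tfrac{\lambda^2}{2})x_3+\alpha x_2$ is bounded below by $c$, while the logarithmic term is one-signed near the walls; matching signs shows that $V$ is bounded on the correct side precisely when $\lambda\ge\sqrt2$ for the lower bound and $\lambda\le\sqrt2$ for the upper bound. Second, the equality case $|\nabla x_1|^2=\cos^2(x_1/\lambda)$, equivalently $\langle e_1,\nu\rangle^2=\sin^2(x_1/\lambda)$, is exactly the first-order identity satisfied by a (possibly tilted) grim reaper cylinder of width $\lambda\pi$, and by no other translator in the slab.

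The analytic heart is then to force $LV\equiv 0$. Finite entropy makes $\Sigma$ proper with at most quadratic Euclidean area growth and finitely many ends; in particular $(\Sigma,g)$ is parabolic, and the super-level sets $\{w>\varepsilon\}$ carry a weighted maximum principle at infinity for $L$. I would first establish the a priori gradient estimate $|\nabla x_1|^2\le\cos^2(x_1/\lambda)$ on $\Sigma$ (via the maximum principle applied to a Bochner-type quantity built from $w$), which signs the bracket in $LV$; combined with the sign of $1-\tfrac{\lambda^2}{2}$ this makes $V$ either $L$-subharmonic and bounded above (for $\lambda\le\sqrt2$) or $L$-superharmonic and bounded below (for $\lambda\ge\sqrt2$). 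In either regime the maximum principle forces $V$ to be constant, hence $LV\equiv 0$, hence (for $\lambda\ne\sqrt2$) the grim-reaper identity $|\nabla x_1|^2=\cos^2(x_1/\lambda)$ holds everywhere; the borderline $\lambda=\sqrt2$ is treated directly from $w$. By the strong maximum principle and unique continuation for the translator equation, $\Sigma$ is then a single grim reaper cylinder of width $\lambda\pi$, tilted by the angle with $\cos\theta=1/\lambda$. Imposing once more that such a reaper lie in $\{(1-\tfrac{\lambda^2}{2})x_3+\alpha x_2\ge c\}$ forces $\alpha=(\tfrac{\lambda^2}{2}-1)\sqrt{\lambda^2-1}$ and $\lambda\le\sqrt2$, whence $\alpha^2=(\tfrac{\lambda^2}{2}-1)^2(\lambda^2-1)\le\tfrac14(\lambda^2-1)$; together with the hypothesis $\alpha^2\ge\lambda^2-1$ this is possible only if $\lambda=1$ and $\alpha=0$, giving $\Sigma=\Gamma\times\mathbb{R}$.

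The main obstacle is precisely the passage to constancy of $V$. Genuine $L$-parabolicity \emph{fails} here --- the grim reaper itself admits the nonconstant bounded $L$-superharmonic function $w$, so it is $L$-non-parabolic --- and therefore one cannot simply quote a Liouville theorem for $(\Sigma,e^{x_3}\,d\mathcal{H}^2)$. The argument must instead localize to the region where $w$ is bounded away from zero, using properness and the half-space bound to control the behaviour at the slab walls (where the weight $e^{x_3}$ blows up) and to run a Phragm\'en--Lindel\"of/capacity argument there. The two supporting technical points --- the a priori estimate $|\nabla x_1|^2\le\cos^2(x_1/\lambda)$ with its rigidity, and the upgrade from the pointwise reaper identity to global coincidence with $\Gamma\times\mathbb{R}$ via unique continuation --- are where the finite-entropy hypothesis and the precise constant $1-\tfrac{\lambda^2}{2}$ in the half-space are used most delicately.
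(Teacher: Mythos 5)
Your overall strategy (a universally superharmonic barrier built from the grim reaper profile, a maximum principle at infinity, and an analysis of the equality case) is in the right spirit, and several of your computations are correct ($Lx_1=Lx_2=0$, $Lx_3=1$, $Lw=-\lambda^{-2}|\nabla x_1|^2w$). But there is a genuine gap at the analytic heart of the argument, and you have in fact identified it yourself without resolving it: you work throughout with $L=\Delta+\langle e_3,\nabla(\cdot)\rangle$, i.e.\ the drift Laplacian of the weight $e^{x_3}$, and you concede that $L$-parabolicity fails (the reaper itself carries the bounded nonconstant $L$-superharmonic function $w$). The proposed repair --- localizing to $\{w>\varepsilon\}$ and running an unspecified Phragm\'en--Lindel\"of/capacity argument near the slab walls --- is not an argument. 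The paper's key move is precisely to \emph{flip the sign of the drift}: it works with the weight $e^{-g}$ for $g=\frac{2\alpha}{\lambda^2}x_2+\frac{2-\lambda^2}{\lambda^2}x_3$, which is bounded below exactly by the halfspace hypothesis, so that finite entropy (quadratic extrinsic volume growth) yields $\Delta_g$-parabolicity outright. The second miracle is that $e^u$, with $u=\frac{x_3}{\lambda^2}+\frac{\alpha}{\lambda^2}x_2+\ln\sin(x_1/\lambda)$, satisfies the pointwise sum-of-squares identity
\[
\Delta_g e^u=\frac{e^u}{\lambda^4}\left((\lambda^2-1-\alpha^2)\langle e_1,\nu\rangle^2-(\alpha\langle e_3,\nu\rangle-\langle e_2,\nu\rangle)^2\right)\leq 0
\]
under the purely algebraic condition $\alpha^2\geq\lambda^2-1$, with no auxiliary estimate needed. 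By contrast, your $LV$ only acquires a sign if the a priori bound $|\nabla x_1|^2\leq\cos^2(x_1/\lambda)$ holds; you assert this ``via a Bochner-type quantity'' but do not prove it, and it is doing essentially all the work (at a point where $\nu=\pm e_3$ in the interior of the slab it would fail, so proving it amounts to excluding such points, which is far from obvious).

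Two further steps are also softer than they need to be. First, from $LV\equiv 0$ you extract only the first-order identity $\langle e_1,\nu\rangle^2=\sin^2(x_1/\lambda)$ and then invoke ``unique continuation'' to conclude that $\Sigma$ is a tilted reaper; this implication is not justified. The paper instead concludes that the \emph{function} $u$ is constant, and since the level sets of the ambient function $\frac{x_3}{\lambda^2}+\frac{\alpha}{\lambda^2}x_2+\ln\sin(x_1/\lambda)$ are precisely the (tilted) grim reaper cylinders, a complete connected surface contained in one of them must equal it --- no unique continuation is required. Second, you do not treat $\lambda>\sqrt{2}$: your dichotomy only discusses which side $V$ is bounded on, whereas the paper disposes of $\lambda>\sqrt 2$ separately by the Omori--Yau maximum principle for $\Delta_f$ applied to the linear function $\left(\frac{\lambda^2}{2}-1\right)x_3-\alpha x_2$, which is bounded above by hypothesis yet has constant positive $f$-Laplacian. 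Your closing arithmetic ruling out nontrivially tilted reapers in the slanted halfspace is essentially correct and mirrors the paper's Case 2, but it sits on top of the unproved steps above.
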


\begin{remark}
\rm{
The geometrically most natural case of Theorem \ref{Thm_RigB} corresponds to the parameters $\lambda=1$, $\alpha=0$, however we state a formally more general version, which will follow from the proof.

Note that the tilted grim reaper cylinders are actually contained within certain slanted upper halfspaces. However, these halfspaces have different slopes compared to those in the assumptions of Theorem \ref{Thm_RigB}. Moreover, note also that the $\Delta$-wings (see Section \ref{Sec_Examples}) are examples of translators which are  contained in an upper halfspace, but they violate the width condition.}   
\end{remark}

We emphasize in particular the following rigidity result which is a direct consequence of Theorem \ref{Thm_RigB}.

\begin{corollary}\label{Thm_RigA}
Let $\Sigma^2\subseteq \mathbb{R}^{3}$ be a translator with finite entropy and contained in a vertical slab of width $\omega = \pi$. Suppose that $\Sigma$ is contained in an upper halfspace. Then, up to rigid motions, $\Sigma$ is the standard grim reaper cylinder $\Gamma\times\mathbb{R}$.
\end{corollary}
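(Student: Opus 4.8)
The plan is to obtain Corollary \ref{Thm_RigA} as the special case $\lambda = 1$, $\alpha = 0$ of Theorem \ref{Thm_RigB}, after first placing the configuration in standard position by a rigid motion. Since the translator equation \eqref{Eq_Transl} is invariant under all translations and under rotations fixing the $x_3$-direction, I would compose a rotation about a vertical axis with a horizontal translation so that the width-$\pi$ vertical slab containing $\Sigma$ becomes $\{0 \le x_1 \le \pi\}$; such a motion leaves the translation direction $e_3$ unchanged and carries any upper halfspace to an upper halfspace. Thus, after normalization, the hypothesis that $\Sigma$ lies in an upper halfspace reads $\Sigma \subseteq \{x_3 \ge c'\}$ for some $c' \in \mathbb{R}$.

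It then remains to check that these normalized hypotheses are exactly those of Theorem \ref{Thm_RigB} with $\lambda = 1$ and $\alpha = 0$. The width $\omega = \pi$ forces $\lambda = 1$, whence $1 - \tfrac{\lambda^2}{2} = \tfrac12 > 0$, so that the slanted halfspace appearing there collapses to an upper halfspace in the translation direction:
\[
\left\{\Big(1-\tfrac{\lambda^2}{2}\Big)x_3+\alpha x_2\geq c\right\} = \left\{\tfrac{1}{2}x_3 \geq c\right\} = \left\{x_3 \geq 2c\right\}.
\]
Choosing $c = c'/2$ reproduces $\Sigma \subseteq \{x_3 \ge c'\}$, while the structural constraint $\alpha^2 \ge \lambda^2 - 1$ becomes $0 \ge 0$ and therefore holds. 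Theorem \ref{Thm_RigB} now applies and gives that $\Sigma$ is the standard grim reaper cylinder $\Gamma \times \mathbb{R}$; undoing the initial rigid motion yields the stated conclusion up to rigid motions.

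Because the corollary is a pure specialization, there is no analytic obstacle, and all the real work is carried by Theorem \ref{Thm_RigB}. The only points deserving a little care are verifying that ``contained in an upper halfspace'' corresponds precisely to the $\alpha = 0$ slice of the slanted-halfspace hypothesis --- which relies on $1 - \tfrac{\lambda^2}{2}$ being positive at $\lambda = 1$ --- and confirming that the rigid motions admissible for translators, namely vertical translations together with rotations about vertical axes and horizontal translations, already suffice to bring an arbitrary width-$\pi$ slab into the normal form $\{0 \le x_1 \le \pi\}$ without disturbing the halfspace condition.
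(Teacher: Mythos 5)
Your proposal is correct and matches the paper's intent exactly: the paper presents Corollary \ref{Thm_RigA} as a direct consequence of Theorem \ref{Thm_RigB}, obtained precisely by the specialization $\lambda=1$, $\alpha=0$ (so that $\big(1-\tfrac{\lambda^2}{2}\big)x_3+\alpha x_2\geq c$ becomes the upper halfspace condition and $\alpha^2\geq\lambda^2-1$ is trivially satisfied), after the harmless rigid-motion normalization of the slab. The only cosmetic quibble is that the width does not \emph{force} $\lambda=1$; one simply \emph{chooses} $\lambda=1$ as the admissible parameter value.
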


\begin{remark}
\rm{
Concerning the width bound in Corollary \ref{Thm_RigA}, note that the so-called $\Delta$-wings (see \cite{BLT20} and \cite{HIMW}) give examples of translators of finite entropy which are bounded from below and contained in vertical slabs of width $w>\pi$. This shows the necessity/sharpness of the condition $w = \pi$ for the uniqueness in Corollary \ref{Thm_RigA}.

To the authors' knowledge, it is still an open problem whether there could exist examples of non-planar translators in a slab of width less than $\pi$, which are not bounded from below. 
By Proposition \ref{Prop_RigGrimGraph}, such examples could certainly not be graphs. However one might naively imagine connecting two planes by a catenoidal neck and adding a (perhaps large) perturbation; see also \cite[Remark 1.7]{HMW_Spruck}.

 Note also that with controlled topology and prescribing furthermore the asymptotic behavior outside of a suitable cylinder, a rigidity result for the grim reaper cylinder has been proven in \cite{MPSS}.
}
\end{remark}

In some previous work (e.g. \cite{C20}, \cite{GMM22}), one relied on classifying by reducing from more general surfaces down to the (semi)graphical i.e mean convex case, and then referring to the full classification of such by \cite{HIMW}, \cite{HMW22}. However, here, our proof of the main Theorem \ref{Thm_RigB} is direct and does not rely on the classification of mean convex solitons.

The proof will instead be in the spirit of the proof of the classical halfspace theorem for 2-dimensional properly immersed minimal surfaces in 3-space, using directly the definition of parabolicity (unlike the original proof in \cite{HM90} which had used nonlinear barriers). Namely, as a consequence of Theorem 3.1 in \cite{KCMR} when $\partial\Sigma =\emptyset$, classical minimal 2-surfaces (with everywhere $H=0$), if contained in a halfspace, are parabolic for the surface Laplacian $\Delta$. Recall that in this case one can simply use the universal harmonicity w.r.t $\Delta$ of the ambient coordinates of $\mathbb{R}^3$ on the minimal surface, to see that if the surface is contained in e.g. the set $x_3 > 0$, then in fact the parabolicity implies $x_3 = \mathrm{const}$.

In our case, we similarly work with a function whose level sets are grim reaper cylinders (in fact slightly more general functions and conclusions). Furthermore, we have to find the correct Laplacian for which there's simultaneously both parabolicity and universal superharmonicity.
In the minimal surface case, just discussed, one discarded the second fundamental form term from the stability operator $\Delta + |A^2|$,
meaning that in the translator case, where the stability operator is $\Delta + e_3\cdot\nabla + |A^2|$, it would seem natural to consider $\Delta + e_3\cdot\nabla$. Unfortunately, parabolicity for this operator on translators fails, due to the exponentially increasing volumes of the metric $e^{x_{3}}\delta_{ij}$ in the translation direction. However, flipping a sign on the gradient term to get $L\doteq\Delta - e_3\cdot\nabla$, which we will below be viewing as a Laplacian on a weighted manifold, and imposing the natural lower boundedness condition on the translator, finally yields a situation where both the crucial properties are now satisfied:
every lower-bounded translator $\Sigma^2$ of finite entropy is $L$-parabolic, and the grim reaper function is universally $L$-superharmonic.
\medskip

As another consequence of Theorem \ref{Thm_RigB} we obtain in particular the following non-existence result.

\begin{corollary}\label{Thm_Nonex}
There do not exist any $2$-dimensional translators $\Sigma^2\subseteq\mathbb{R}^{3}$ with finite entropy, contained in a slab of width $\omega$ and satisfying one of the following assumptions:
\begin{itemize}
\item[(a)] $\omega<\pi$, and $\Sigma$ is contained in an upper halfspace w.r.t. the translating direction;
\item[(b)] $\omega=\pi$, and $\Sigma$ is contained in a slanted halfspace of the form $\{x_3+\beta x_2\geq c\}$, with $\beta\neq 0$.
\end{itemize} 
\end{corollary}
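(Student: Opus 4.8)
The plan is to derive Corollary~\ref{Thm_Nonex} as a direct consequence of Theorem~\ref{Thm_RigB} by checking that, in each of the two cases, the hypotheses of the theorem are met while its conclusion is precluded, so that no such $\Sigma$ can exist. The underlying idea is that Theorem~\ref{Thm_RigB} pins down the grim reaper cylinder as the unique translator satisfying a lower-boundedness condition in a slab of width $\lambda\pi$; whenever the geometry forces parameters $(\lambda,\alpha)$ that the theorem does not permit (or forces $\Sigma$ into a halfspace incompatible with being a grim reaper cylinder), the resulting contradiction yields non-existence.

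\textbf{Case (a).} Here I would argue by contradiction: suppose such a $\Sigma$ exists, with $\omega<\pi$ and $\Sigma$ contained in an upper halfspace $\{x_3\geq c\}$. Since $\Sigma$ lies in a slab of width $\omega<\pi$, one can certainly regard it as lying in a slab of width $\lambda\pi$ with $\lambda = \omega/\pi < 1$. This, however, already violates the standing hypothesis $\lambda\geq 1$ of Theorem~\ref{Thm_RigB}, so I cannot invoke the theorem directly at this width. The correct move is to \emph{enlarge} the slab to width exactly $\pi$, i.e. apply Theorem~\ref{Thm_RigB} with $\lambda = 1$ and $\alpha = 0$: the containment in an upper halfspace $\{x_3\geq c\}$ is precisely the condition $\big(1-\tfrac{\lambda^2}{2}\big)x_3 + \alpha x_2 \geq c'$ with $\lambda=1$, $\alpha=0$ (where the coefficient $1-\tfrac12 = \tfrac12 > 0$ preserves the halfspace direction), and the constraint $\alpha^2\geq \lambda^2 - 1 = 0$ holds trivially. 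Thus Theorem~\ref{Thm_RigB} applies and forces $\Sigma$ to be a grim reaper cylinder $\Gamma\times\mathbb{R}$. But $\Gamma\times\mathbb{R}$ occupies a slab of width exactly $\pi$, contradicting the assumption that $\Sigma$ is contained in a slab of width $\omega<\pi$. Hence no such $\Sigma$ exists.

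\textbf{Case (b).} Again I would argue by contradiction, assuming $\Sigma$ exists with $\omega = \pi$ and $\Sigma\subseteq\{x_3 + \beta x_2 \geq c\}$ for some $\beta\neq 0$. The aim is to match this to the hypothesis $\Sigma\subseteq\{(1-\tfrac{\lambda^2}{2})x_3 + \alpha x_2 \geq c\}$ of Theorem~\ref{Thm_RigB} by rescaling the linear functional. With $\lambda = 1$ the coefficient of $x_3$ is $1-\tfrac12 = \tfrac12$, so dividing the inequality $x_3 + \beta x_2 \geq c$ by $2$ gives $\tfrac12 x_3 + \tfrac{\beta}{2} x_2 \geq \tfrac{c}{2}$, i.e. the theorem's condition with $\lambda = 1$, $\alpha = \beta/2$, and new constant $c/2$. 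The admissibility requirement $\alpha^2 \geq \lambda^2 - 1 = 0$ is satisfied for any $\beta$, so Theorem~\ref{Thm_RigB} applies and forces the conclusion $\alpha = 0$, i.e. $\beta/2 = 0$, contradicting $\beta\neq 0$. Therefore no such $\Sigma$ can exist.

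The reasoning in both cases is essentially a bookkeeping exercise in reparametrizing the halfspace inequality to fit the template of Theorem~\ref{Thm_RigB}, so I do not anticipate a genuine analytic obstacle. The one point requiring care — the main thing to get right — is the normalization of the linear functional defining the slanted halfspace: one must verify that the coefficient $1-\tfrac{\lambda^2}{2}$ of $x_3$ is positive (which holds at $\lambda = 1$, giving $\tfrac12$) so that rescaling to match genuinely preserves the ``upper'' orientation of the halfspace rather than flipping it, and that the admissibility constraint $\alpha^2\geq\lambda^2-1$ is met by the induced value of $\alpha$. With these checks in place, the theorem's rigid conclusion $(\lambda,\alpha)=(1,0)$ directly contradicts the defining features of each case.
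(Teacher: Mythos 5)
Your proposal is correct and follows exactly the intended route: the paper offers no separate argument for this corollary beyond declaring it a consequence of Theorem~\ref{Thm_RigB}, and your two reductions (enlarging the slab to width $\pi$ and taking $(\lambda,\alpha)=(1,0)$ in case (a); rescaling the slanted halfspace inequality to take $(\lambda,\alpha)=(1,\beta/2)$ in case (b)) are precisely the bookkeeping needed. Both contradictions — the grim reaper cylinder's width being exactly $\pi$, and the forced conclusion $\alpha=0$ against $\beta\neq 0$ — are valid.
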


Note that a similar result as the first part of the previous Corollary \ref{Thm_Nonex}, was obtained in \cite[Proposition 9.1]{GMM22}. In this latter paper the same conclusion was reached imposing, additionally to the finite entropy requirement, topological assumptions (finite genus and only one end, in the sense of connectedness at infinity) instead of the confinement to an upper halfspace, and with different techniques. These topological properties are features of stable translators. See \cite{KS, IR_MathZ} and Section \ref{SubSect_Stab}.

In all previous rigidity or non-existence results for collapsed translator some type of boundedness assumption was needed in order to reach of conclusion. When we have a precise description of an $x_3$-slice of the translator we can actually avoid this extra assumption. Indeed our techniques also permit obtaining the following:

\begin{theorem}\label{Thm_RigPrescrSlice}
Let $\Sigma^2\subseteq\mathbb{R}^{3}$ be a translator with finite entropy contained in a vertical slab. Assume that there exists a regular value $a\in\mathbb{R}$ of $x_3$ such that $\Sigma\cap \left\{x_{3}=a\right\}$ is a straight line parallel to the slab and that $\Sigma\cap \left\{x_{3}< a\right\}\neq\emptyset$. Then $\Sigma$ must be a vertical plane parallel to the slab.
\end{theorem}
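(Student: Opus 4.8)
The plan is to run a parabolicity argument in the spirit of the proof of Theorem \ref{Thm_RigB}, but on the \emph{lower} piece $\Sigma_a^{-}:=\Sigma\cap\{x_3\le a\}$ and with the sign of the drift reversed. Since $a$ is a regular value, $\Sigma_a^{-}$ is a (non-compact) surface with smooth boundary $\partial\Sigma_a^{-}=\Sigma\cap\{x_3=a\}=\ell$, and by hypothesis $\ell$ is the straight line $\{x_1=p,\ x_3=a\}$ running in the $e_2$-direction (a line parallel to the slab and lying in $\{x_3=a\}$ is forced to sit at a fixed height and a fixed $x_1=p$); moreover $\Sigma_a^{-}$ has nonempty interior because $\Sigma\cap\{x_3<a\}\ne\emptyset$. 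The crucial observation is that, although $\Sigma$ is not assumed bounded from below, the piece $\Sigma_a^{-}$ is bounded from \emph{above}, and this one-sided bound is exactly what makes it parabolic for the ``mirror'' operator $\widetilde L:=\Delta+e_3\cdot\nabla$, i.e. the weighted Laplacian of $(\Sigma_a^{-},e^{x_3}\,d\mathrm{vol})$. This is the same parabolicity phenomenon exploited for $L=\Delta-e_3\cdot\nabla$ in Theorem \ref{Thm_RigB}, seen through the reflection $x_3\mapsto -x_3$, which interchanges $L$ with $\widetilde L$ and ``bounded below'' with ``bounded above.''

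First I would record the parabolicity of $\Sigma_a^{-}$. On $\Sigma_a^{-}$ one has $e^{x_3}\le e^{a}$, so finite entropy (hence Euclidean area growth $\mathrm{Area}(\Sigma\cap B_R)\le CR^2$) yields the weighted volume bound $\int_{\Sigma_a^{-}\cap B_R}e^{x_3}\,d\mathrm{vol}\le e^{a}CR^2$. By the Grigor'yan--Karp volume-growth criterion this makes $(\Sigma_a^{-},e^{x_3}\,d\mathrm{vol})$ parabolic for $\widetilde L$ in the Neumann sense adapted to $\ell$. Concretely, parabolicity provides cutoffs $\phi_j$, with $0\le\phi_j\le 1$, $\phi_j\to1$ locally uniformly, compactly supported, and $\int_{\Sigma_a^{-}}|\nabla\phi_j|^2\,e^{x_3}\,d\mathrm{vol}\to0$.

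Next I would introduce the bounded test function $u:=(x_1-p)^2$, which is non-negative and, since $\Sigma$ lies in the slab, bounded. Using $\Delta_\Sigma x_1=\langle\mathbf H,e_1\rangle=\langle\nu,e_3\rangle\langle\nu,e_1\rangle$ and $\langle e_3,\nabla_\Sigma x_1\rangle=-\langle\nu,e_1\rangle\langle\nu,e_3\rangle$, one checks that the first-order (drift) terms cancel for functions of $x_1$ alone, leaving
\[
\widetilde L u \;=\; 2\,\bigl(1-\langle\nu,e_1\rangle^2\bigr)\;\ge\;0 ,
\]
so $u$ is $\widetilde L$-subharmonic. Crucially, because $x_1\equiv p$ along $\ell$ we have $\nabla_\Sigma u=2(x_1-p)\,e_1^{\top}\equiv 0$ on $\ell$, i.e. $u$ satisfies a homogeneous Neumann condition on $\partial\Sigma_a^{-}$. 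Then I would run the Kelvin--Nevanlinna--Royden argument: testing $\int_{\Sigma_a^{-}}\phi_j^2\,u\,\widetilde L u\,e^{x_3}\,d\mathrm{vol}\ge0$ and integrating by parts (the boundary integral over $\ell$ vanishes since $\nabla_\Sigma u\equiv0$ there, and $\phi_j$ is compactly supported), a Cauchy--Schwarz step gives
\[
\int_{\Sigma_a^{-}}\phi_j^2\,|\nabla_\Sigma u|^2\,e^{x_3}\,d\mathrm{vol}\;\le\; C\,\|u\|_\infty^2\int_{\Sigma_a^{-}}|\nabla\phi_j|^2\,e^{x_3}\,d\mathrm{vol}.
\]
Letting $j\to\infty$ and invoking parabolicity forces $\int_{\Sigma_a^{-}}|\nabla_\Sigma u|^2\,e^{x_3}\,d\mathrm{vol}=0$, hence $u$ is constant on $\Sigma_a^{-}$; since $u=0$ on $\ell$ this constant is $0$, so $x_1\equiv p$ on $\Sigma_a^{-}$. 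As $\Sigma_a^{-}$ has nonempty interior and translators are real-analytic, unique continuation upgrades this to $x_1\equiv p$ on all of $\Sigma$; being a complete connected surface contained in the $2$-plane $\{x_1=p\}$, $\Sigma$ must be exactly that vertical plane, which is parallel to the slab.

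The part I expect to be most delicate is the parabolicity input with boundary: one must take parabolicity in the Neumann sense suited to the non-compact boundary $\ell$, so that the cutoffs may be handled together with an integration by parts in which the $\ell$-boundary term genuinely drops out (here it does, because $\nabla_\Sigma u$ vanishes identically on $\ell$, not merely its normal component), and one must verify that the finite-entropy Euclidean area bound indeed passes to the weighted geodesic balls of $\Sigma_a^{-}$. The reversal of the drift sign — legitimate precisely because we have restricted to the upper-bounded piece $\Sigma_a^{-}$ — is exactly what lets us dispense with the global lower bound required in the other rigidity results.
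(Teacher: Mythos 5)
Your proposal is correct and follows essentially the same route as the paper: the paper also restricts to the upper-bounded piece $\Sigma_a=\Sigma\cap\{x_3\le a\}$, observes that the weight $e^{x_3}$ is bounded there so that finite entropy yields Neumann $f$-parabolicity for $\Delta_f=\Delta+e_3\cdot\nabla$ (your $\widetilde L$), and uses $\Delta_f x_1=0$ together with $x_1\equiv p$ on the boundary line to force $x_1\equiv p$ (Theorem \ref{Thm_RigStraightLine}). The only cosmetic differences are that the paper applies the Ahlfors maximum principle of Proposition \ref{PropAhlforsStokes} directly to the bounded $\Delta_f$-harmonic function $x_1$ rather than running a Caccioppoli argument on $(x_1-p)^2$, and obtains the Neumann parabolicity of $\Sigma_a$ by restricting capacity test functions from $\Sigma$ rather than by your (equally valid) direct weighted volume bound.
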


We note that the assumption of collapsedness (i.e. the translators being contained in vertical slabs) is a very natural one, for example in the light of the classification of the projected convex hulls of translators in \cite{CM19}, namely as: slabs (incl. their degenerate cases of flat planes, halfspaces or the entire space).

\medskip

The proof of Theorem \ref{Thm_RigB} will be presented in Section \ref{Section_RigSlabs}, while the proof of Theorem \ref{Thm_RigPrescrSlice} will be discussed in Section \ref{Section_RigPrescrBnd}.

We additionally include Appendix \ref{Equidist}, of independent interest, which contains the characterization of equidistant surfaces to a translator with respect to the ambient Ilmanen conformal metric. This yields an interesting foliation of the vertical $\pi$-slab, which one might expect could be geometrically significant. Foliations of this kind were used e.g. in \cite{Mazet} for proving halfspace type theorem in other settings. We however also note that it gives a different foliation from the level sets of the ambient function in \eqref{universal_super} which were eventually used in the proof of our main results, Theorem \ref{Thm_RigB}, Corollary \ref{Thm_RigA} and Theorem \ref{Thm:RigPar}.
\medskip

\begin{center}
\textit{Acknowledgements}
\end{center}

We would like to thank the anonymous referees for their thoughtful and constructive comments, which significantly improved the quality of this manuscript.

\medskip

\begin{center}
\textit{Added note}
\end{center}

During the writing of the manuscript, the authors were made aware that some results in the same direction as Theorem \ref{Thm_RigB} were also obtained in the very recent preprint by D. Hoffman, F. Mart\'in and B. White, \cite{HMW_Spruck}. The two proofs are however clearly different in spirit and the results as well have different features. The most evident advantage of the approach in \cite{HMW_Spruck}, using suitable nonlinear barriers, is that they only assume that the slab-confined translator is lower bounded on one $x_2$-slice, while our upper halfspace assumption is asking that there are no wings going down. 

On the other hand, our direct approach permits easily getting a sharp rigidity statement also in the boundary-less setting, which seems to be more difficult to handle with the techniques in \cite{HMW_Spruck}, where the translators with boundary in their Theorem 1.3 are assumed to obey the Dirichlet-type condition that the boundary consists of straight lines. Also, while in their Theorem 1.4 there is no rigidity or uniqueness statement, we do obtain one in our Theorem \ref{Thm_RigB}. Furthermore, we explicitly note that, when considering slanted halfspaces, our Theorem \ref{Thm_RigB} even gives non-existence also in slightly wider slabs, up to the width $\sqrt{2}\pi$.

\section{Examples}\label{Sec_Examples}

We list here all the examples of complete translators, i.e. solutions to \eqref{Eq_Transl}, in slabs of $\mathbb{R}^3$ which we are aware of at the moment.

\begin{enumerate}
 \item Vertical planes.
 \item Grim reaper cylinders $\Gamma\times \mathbb{R}$, where $\Gamma$ is the grim reaper curve (see \eqref{def:grim}), as well as the so-called tilted grim reaper cylinders (see e.g. \cite{SX}). 
 \item The so-called $\Delta$-wings \cite{BLT20}, \cite{HIMW}.
 \item Nguyen's periodic examples aka. ``tridents'' \cite{N09}, \cite{HMW22}.
 \item Pitchforks (by Hoffman, Mart\'in and White \cite{HMW_Scherk}).
 \item Prongs (by Hoffman, Mart\'in and White, \cite{HMW_Annuli_InPrep}).
 \item Capped and uncapped translating annuloids (by Hoffman, Mart\'in and White \cite{HMW_Annuli_InPrep}, \cite{HMW_Spruck}).
\end{enumerate}

Among these examples: note that the capped translating annuloids are collapsed and bounded from below. In particular, our Theorem \ref{Thm_RigB} shows that the width of any example with those two properties has at least the width of the grim reaper cylinder.
\medskip

Apart from these, there are of course numerous examples of translators not contained in slabs, e.g. the bowl soliton, the Clutterbuck-Schn\"urer-Schulze examples, \cite{CSS}, and gluings of these, \cite{DDPN}.

\section{Some preliminary material}

\subsection{Background on weighted manifolds and parabolicity.}
Let $(M^m, g_{M})$ be a complete $m$-dimensional Riemannian manifold with (possibly empty) smooth boundary $\partial M$. Given $h\in C^{\infty}(M)$ we can consider the weighted manifold $M^{m}_h=(M^{m}, g_{M}, e^{-h}d\mathrm{vol}_M)$, 
where $d\mathrm{vol}_M$ denotes the canonical Riemannian volume form on 
$M$. Associated to the weighted manifold $M_h$ 
there is a natural divergence form second order diffusion operator, the $h$-Laplacian,
defined on $u$ by
\begin{equation}\label{Def:f-Laplacian}
\Delta_{h}u=e^{h}\operatorname{div}\left(e^{-h}\nabla u\right)=
\Delta u -g_{M}( \nabla u, \nabla h).
\end{equation}
This is clearly symmetric on the space $L^{2}(M_h)$, endowed with the inner product
\[
(u,v)_{L^2(M_h)}=\int_{M} uv e^{-h}d\mathrm{vol}_{M}.
\]
Let $\eta$ be the outward pointing unit normal to $\partial M$. 

\begin{definition}\label{def_NeumannParabolicity}
The weighted manifold $M_h$ is said to be (Neumann) $h$-\textit{parabolic} if for every $u\in C^{0}(M)\cap W^{1,2}_{\mathrm{loc}}(\mathrm{int}(M), e^{-h}d\mathrm{vol}_{M})$ such that 
\[
\begin{cases}
\Delta_{h}u\geq0&\mathrm{in}\,\,\mathrm{int}(M)\\
\frac{\partial u}{\partial \eta}\leq 0 &\mathrm{on}\,\,\partial M\\
\sup_{M}u<+\infty&
\end{cases}
\]
$u$ is necessarily constant.    
\end{definition}

In the boundary-less setting we just leave out the boundary condition. Moreover, when $h\equiv0$ we will simply say that $M$ is (Neumann) parabolic.
\medskip

As a matter of fact, $h$-parabolicity is related to a wide class of equivalent properties involving the recurrence of the Brownian motion, $h$-capacities of condensers, the heat kernel associated to the drifted Laplacian, weighted volume growth, 
function theoretic tests, global divergence theorems
and many other geometric and potential-analytic properties. Referring to \cite{ILPS} and \cite{IPS} for a general presentation of the theory and for detailed proofs, here we limit ourselves to pointing out a few characterizations.

\begin{proposition}[\cite{IPS}, \cite{ILPS}]\label{PropAhlforsStokes}
Let $M_h$ be a weighted manifold with boundary $\partial M\neq \emptyset$ and outward pointing unit normal $\eta$. Then $M_h$ is (Neumann) $h$-parabolic if and only if one of the following equivalent conditions holds:
\begin{itemize}
\item[(a)] (Ahlfors maximum principle) For every domain $D\subseteq M$ with $\partial_0D\doteq\partial D\cap\mathrm{int}(M)\neq\emptyset$ and for every $u\in C^{0}(\bar{D})\cap W^{1,2}_{\mathrm{loc}}(D, e^{-h}d\mathrm{vol}_{M})$ satisfying
\[
\begin{cases}
\Delta_{h}u\geq0&\mathrm{in}\,\,\mathrm{int}(D)\\
\frac{\partial u}{\partial \eta}\leq 0 &\mathrm{on}\,\,\partial M\cap D\\
\sup_{D}u<+\infty&
\end{cases}
\]
in the weak sense, it holds that
\[
\sup_D u=\sup_{\partial_0 D}u.
\]
\item[(b)] (Global $L^2$-Stokes theorem) Let $X$ be a vector field on $M$ satisfying 
\begin{align*}
&|X|\in L^2(M_h),\\
&g_{M}( X,\eta)\in L^{1}((\partial M)_h),\\
&\mathrm{div}X\in L^{1}_{\mathrm{loc}}(M), \qquad(\mathrm{div}X)_{-}\in L^{1}(M_h).
\end{align*}
Then
\[
\int_{M}e^h\mathrm{div}\left(e^{-h}X\right)e^{-h}d\mathrm{vol}_{M}=\int_{\partial M}g_{M}( X,\eta) e^{-h}d\mathrm{vol}_{m-1},
\]
where $d\mathrm{vol}_{m-1}$ is the $(m-1)$-dimensional Hausdorff measure.
\item[(c)] For any compact set $K\subset M$, the $h$-capacity 
\[
\ \mathrm{Cap}_{h}(K,M)=\inf\left\{\int_{M}|\nabla u|^2e^{-h}d\mathrm{vol}_{M}:\,u\in C_{c}^{\infty}(M),\, u\geq 1\,\mathrm{on}\,K\right\}
\]
is zero.
\end{itemize}
Moreover, when $D=M$, if $M_h$ is (Neumann) $h$-parabolic and $u\in C^{0}(M)\cap W^{1,2}_{\mathrm{loc}}(D, e^{-h}d\mathrm{vol}_{M})$ satisfies
\[
\begin{cases}
\Delta_{h}u\geq0&\mathrm{in}\,\,\mathrm{int}(M)\\
\sup_{M}u<+\infty&
\end{cases}
\]
then
\[
\sup_M u=\sup_{\partial M}u.
\]
\end{proposition}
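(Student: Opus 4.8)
The plan is to deduce the three characterizations from one another by routing everything through the potential-theoretic condition (c), which I would take as the main engine. The cornerstone I would establish first is the classical equivalence, in the weighted Neumann category of \cite{IPS} and \cite{ILPS}, between $h$-parabolicity in the sense of Definition~\ref{def_NeumannParabolicity} and the vanishing of the $h$-capacity of every compact $K\subset M$, i.e. (c). What makes (c) usable is the standard reformulation that $\mathrm{Cap}_h(K,M)=0$ for all compacta is equivalent to the existence of a sequence $\{\phi_j\}\subset C^\infty_c(M)$ with $0\le\phi_j\le 1$, with $\phi_j\equiv 1$ on an exhaustion of $M$ by compact sets, and with $\int_M|\nabla\phi_j|^2e^{-h}\,d\mathrm{vol}_M\to 0$. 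I would prove the remaining implications by feeding this family of capacitary cutoffs into integration-by-parts arguments.

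To derive the global $L^2$-Stokes theorem (b) from (c), I would apply the ordinary compactly supported divergence theorem to the vector field $e^{-h}\phi_j X$, whose support is compact, and rearrange to obtain
\[
\int_M\phi_j\,e^{h}\mathrm{div}\!\left(e^{-h}X\right)e^{-h}d\mathrm{vol}_M
=\int_{\partial M}\phi_j\,g_M(X,\eta)\,e^{-h}d\mathrm{vol}_{m-1}
-\int_M g_M(\nabla\phi_j,X)\,e^{-h}d\mathrm{vol}_M.
\]
The error term is controlled by Cauchy--Schwarz, since $|\int_M g_M(\nabla\phi_j,X)e^{-h}|\le\|\nabla\phi_j\|_{L^2(M_h)}\|X\|_{L^2(M_h)}\to 0$, using precisely the hypothesis $|X|\in L^2(M_h)$; the two remaining terms converge by dominated convergence from $g_M(X,\eta)\in L^1((\partial M)_h)$ and by monotone convergence on the negative part, where the integrability of $(\mathrm{div}X)_-$ is exactly what prevents degeneration. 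For the Ahlfors maximum principle (a) I would argue by contradiction: if $\sup_D u>\sup_{\partial_0 D}u=:m$, then $v\doteq(u-m)_+$ is a nonnegative, bounded, weakly $\Delta_h$-subharmonic function supported away from $\partial_0 D$, and testing the subharmonicity inequality against $\phi_j^2 v$ (a Kelvin--Nevanlinna--Royden-type test) forces $\int_D|\nabla v|^2e^{-h}\to 0$, hence $v\equiv 0$, a contradiction. The Neumann sign condition $\partial u/\partial\eta\le 0$ is what gives the boundary contribution in this integration by parts the favorable sign.

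To close the circle I would handle the reverse directions as follows. From (b) back to Definition~\ref{def_NeumannParabolicity}, apply the $L^2$-Stokes identity to $X=\chi(u)\nabla u$ for a bounded nondecreasing $\chi$: then $e^{h}\mathrm{div}(e^{-h}X)\ge\chi'(u)|\nabla u|^2\ge 0$, while the boundary integrand $\chi(u)\,\partial u/\partial\eta$ is nonpositive, so—after arranging the $L^2(M_h)$-integrability of $X$ via the same capacitary cutoffs—one gets $\int_M\chi'(u)|\nabla u|^2 e^{-h}=0$ and hence $u$ constant. From (a) back to (c) I would argue by contraposition: if some compact $K$ had $\mathrm{Cap}_h(K,M)>0$, its $h$-capacitary potential would be a bounded nonconstant $\Delta_h$-harmonic function on a domain violating (a). Finally, the concluding $D=M$ assertion follows by applying $h$-parabolicity to $v\doteq(u-\sup_{\partial M}u)_+$, which vanishes on the nonempty $\partial M$ and therefore satisfies $\partial v/\partial\eta\le 0$ there; parabolicity makes $v$ constant, so $v\equiv 0$ and $\sup_M u=\sup_{\partial M}u$. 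I expect the genuine difficulty to lie not in this bookkeeping but in the two analytic cornerstones: the equivalence of $h$-parabolicity with vanishing $h$-capacity (requiring existence and regularity of capacitary potentials in the weighted setting) and the careful justification of every integration by parts in the weak $W^{1,2}_{\mathrm{loc}}$ framework with the Neumann condition, where the sign hypotheses and the $L^2$/$L^1$ integrability requirements become indispensable; for these I would rely on the weighted potential theory of \cite{IPS} and \cite{ILPS}.
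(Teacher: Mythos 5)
The paper gives no proof of this proposition: it is quoted from \cite{IPS} and \cite{ILPS} (``Referring to \cite{ILPS} and \cite{IPS} \dots for detailed proofs''), so there is no in-paper argument to compare against. Your outline is the standard Kelvin--Nevanlinna--Royden/capacity circle of implications, which is exactly how those references (and Grigor'yan's survey they build on) establish the equivalences, so in spirit you are reproducing the source's proof. The pieces that are genuinely sound as sketched are: (c) $\Rightarrow$ (b) via the cutoffs $\phi_j$ and Cauchy--Schwarz on $\int g_M(\nabla\phi_j,X)e^{-h}$; (c) $\Rightarrow$ (a) via the Caccioppoli test $\phi_j^2 v$ (with the small caveat that you should work with $v=(u-c)_+$ for some $c$ strictly between $\sup_{\partial_0 D}u$ and $\sup_D u$, so that $v$ vanishes on a neighborhood of $\partial_0 D$ and extends by zero to a weakly $\Delta_h$-subharmonic function); and the final $D=M$ assertion, which is correctly handled directly from Definition~\ref{def_NeumannParabolicity} rather than from (a), since $\partial_0 M=\emptyset$ there.

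The one step I would not accept as written is (b) $\Rightarrow$ parabolicity via $X=\chi(u)\nabla u$. The hypothesis $|X|\in L^2(M_h)$ is not automatic for a bounded weakly subharmonic $u$ --- a priori $\nabla u\notin L^2(M_h)$ --- and your remedy of ``arranging the integrability via the same capacitary cutoffs'' is circular at that point: those cutoffs are produced by (c), which you have not yet derived from (b). The standard repair, and the one effectively used in \cite{IPS}/\cite{ILPS}, is to prove (b) $\Rightarrow$ (c) first, by testing the Stokes identity against (a suitable truncation of) $w\nabla w$ where $w$ is the equilibrium potential of a condenser $(K,\Omega)$; this $w$ has $\int_M|\nabla w|^2e^{-h}=\mathrm{Cap}_h(K,\Omega)<\infty$ by construction, so the $L^2$ hypothesis is free, and positivity of the capacity contradicts the vanishing of the flux forced by (b). Liouville then follows from (c) by the same Caccioppoli argument you already describe. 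With that rerouting, and granting the existence/regularity of weighted equilibrium potentials (which you correctly flag as the analytic cornerstone to be outsourced to the references), your plan is complete.
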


\begin{lemma}\label{LemInfhPar}
Let $(M, g_{M})$ be a (Neumann) parabolic Riemannian manifold with boundary $\partial M\neq \emptyset$. Then, for any $h\in C^{\infty}(M)$ with $\inf_{M} h>-\infty$, $(M, g_{M})$ is also (Neumann) $h$-parabolic. 
\end{lemma}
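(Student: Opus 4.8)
The plan is to reduce $h$-parabolicity to the ordinary ($h\equiv 0$) parabolicity by means of the capacity characterization in Proposition \ref{PropAhlforsStokes}(c), exploiting that a one-sided comparison between the weighted and unweighted Dirichlet energies is available as soon as the weight is bounded. The point is that the condition $\inf_M h>-\infty$ is exactly what is needed to control the weighted energy $\int_M|\nabla u|^2 e^{-h}\,d\mathrm{vol}_M$ by the unweighted one, uniformly in the test function $u$.

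First I would record the pointwise estimate coming from the hypothesis. Setting $C\doteq\inf_M h>-\infty$, we have $h\geq C$ on $M$, hence $e^{-h}\leq e^{-C}$ everywhere on $M$. Consequently, for every test function $u\in C_c^\infty(M)$,
\[
\int_M|\nabla u|^2 e^{-h}\,d\mathrm{vol}_M\leq e^{-C}\int_M|\nabla u|^2\,d\mathrm{vol}_M.
\]
Next I would pass to the capacities. Fix any compact set $K\subset M$ and take the infimum of both sides of the last inequality over the class of all $u\in C_c^\infty(M)$ with $u\geq 1$ on $K$. Since this is precisely the same admissible class defining both $\mathrm{Cap}_h(K,M)$ and $\mathrm{Cap}_0(K,M)$, we obtain
\[
\mathrm{Cap}_h(K,M)\leq e^{-C}\,\mathrm{Cap}_0(K,M).
\]

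Finally I would invoke the assumed parabolicity. Since $(M,g_M)$ is (Neumann) parabolic, Proposition \ref{PropAhlforsStokes}(c) gives $\mathrm{Cap}_0(K,M)=0$ for every compact $K\subset M$, and therefore $\mathrm{Cap}_h(K,M)=0$ as well, capacities being nonnegative. Applying the equivalence of Proposition \ref{PropAhlforsStokes}(c) in the reverse direction, now to the weighted manifold $M_h$, yields that $M_h$ is (Neumann) $h$-parabolic, as claimed.

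As for difficulty, there is no substantive obstacle once the capacity characterization is in hand, since the argument reduces to a single energy comparison. The only points requiring a little care are that the comparison uses solely the lower bound on $h$ (equivalently the upper bound on the weight $e^{-h}$), and that the admissible class of test functions in the definition of $\mathrm{Cap}_h(K,M)$ coincides with that for $\mathrm{Cap}_0(K,M)$, so that passing to the infimum is legitimate; both are immediate here. One should merely note that the equivalence in Proposition \ref{PropAhlforsStokes} is stated for weighted manifolds with arbitrary (smooth) weight, so it applies equally to the two weights $0$ and $h$ appearing above.
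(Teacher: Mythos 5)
Your proof is correct, but it takes a different route from the paper's. You use the capacity characterization, item (c) of Proposition \ref{PropAhlforsStokes}: from $h\geq C\doteq\inf_M h$ you get the pointwise bound $e^{-h}\leq e^{-C}$, hence $\mathrm{Cap}_h(K,M)\leq e^{-C}\,\mathrm{Cap}_0(K,M)=0$ for every compact $K$, since the admissible class of test functions is the same for both weights; the equivalence applied to $M_h$ then gives $h$-parabolicity. The paper instead uses the global $L^2$-Stokes characterization, item (b) of the same proposition: given a vector field $X$ satisfying the weighted integrability hypotheses, the lower bound on $h$ shows that $e^{-h}X$ satisfies the unweighted hypotheses, so the unweighted Stokes identity applied to $e^{-h}X$ yields the weighted identity for $X$. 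Your argument is arguably the more economical of the two, resting on a single monotonicity of Dirichlet energies under a bounded change of weight, whereas the paper's route requires verifying several integrability conditions for the modified field $e^{-h}X$ (e.g.\ $\int_M|e^{-h}X|^2\,d\mathrm{vol}_M\leq e^{-C}\int_M|X|^2e^{-h}\,d\mathrm{vol}_M$). Both are legitimate since all the listed conditions are stated as equivalent to (Neumann) $h$-parabolicity; the only care needed in your version, which you correctly flag, is that the infimum defining the two capacities is taken over the identical class $\{u\in C_c^{\infty}(M):\,u\geq1\text{ on }K\}$, so the termwise inequality passes to the infima.
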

\begin{proof}
Take any vector field $X$ on $M$ satisfying 
\begin{align*}
&|X|\in L^2(M_h),\\
&g_{M}( X,\eta)\in L^{1}((\partial M)_{h}),\\
&\mathrm{div}X\in L^{1}_{\mathrm{loc}}(M), \qquad(\mathrm{div}X)_{-}\in L^{1}(M_h),
\end{align*}
where $\eta$ is the outward pointing unit normal to the boundary $\partial M$. Since $\inf_{M}h>-\infty$ one easily get that that 
\begin{align*}
&|e^{-h}X|\in L^2(M),\\
&g_{M}(e^{-h}X,\eta)\in L^{1}(\partial M),\\
&(\mathrm{div}(e^{-h}X))_{-}\in L^{1}(M).
\end{align*}
By Proposition \ref{PropAhlforsStokes} we hence get that
\begin{align*}
\int_{M}e^h\mathrm{div}\left(e^{-h}X\right)e^{-h}d\mathrm{vol}_{M}=&\int_{M}\mathrm{div}(e^{-h}X)d\mathrm{vol}\\=&\int_{\partial M}g_{M}(e^{-h}X, \eta)d\mathrm{vol}=\int_{\partial M}g_{M}(X, \eta)e^{-h}d\mathrm{vol},
\end{align*}
and hence, by Proposition \ref{PropAhlforsStokes} $(M, g_{M})$ is $h$-parabolic.
\end{proof}

From a geometric point of view, $h$-parabolicity is related to the growth-rate of the weighted volumes of intrinsic metric objects. Indeed, exploiting a standard result due to Grigor'yan, \cite[Theorem 7.3]{Gr}, in \cite{ILPS} is given a direct proof of the following result, which will be sufficient for our scope here.
\begin{proposition}[see e.g. \cite{ILPS}]\label{Prop_quadparabolicity}
Let $M_h$ be a complete weighted manifold with (possibly empty) boundary $\partial M$. If, for some reference point $o\in M$,
\begin{equation*}
\mathrm{vol}_h(B_{r}^M(o))= \mathcal{O}(r^2),\qquad r\to+\infty
\end{equation*}
then $M_h$ is $h$-parabolic.
\end{proposition}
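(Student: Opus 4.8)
The plan is to prove that, under the stated weighted quadratic volume growth, every compact set has vanishing $h$-capacity; parabolicity then follows from the capacity characterization of $h$-parabolicity (Proposition~\ref{PropAhlforsStokes}(c), which holds verbatim also in the boundaryless setting; see \cite{Gr}, \cite{ILPS}). Concretely, I would fix constants $C>0$ and $r_1>0$ such that $\mathrm{vol}_h(B_r^M(o))\le C r^2$ for all $r\ge r_1$, and take $K\subset M$ to be an arbitrary compact set. I would then choose $R_0\ge r_1$ so large that $K\subset B_{R_0}^M(o)$, and write $\rho(x)=\mathrm{dist}_M(o,x)$ for the (Lipschitz) distance function, which satisfies $|\nabla\rho|\le 1$ almost everywhere.

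For each integer $N\ge 1$ I would introduce the logarithmic cutoff
\begin{equation*}
u_N=\min\Big\{1,\ \max\big\{0,\ 1-\tfrac{1}{N\log 2}\log(\rho/R_0)\big\}\Big\},
\end{equation*}
which is Lipschitz, compactly supported in $B_{R}^M(o)$ with $R=2^N R_0$, identically equal to $1$ on $B_{R_0}^M(o)\supseteq K$, and hence an admissible competitor for $\mathrm{Cap}_h(K,M)$ (up to the standard fact that the capacity computed over compactly supported Lipschitz functions agrees with the one over $C_c^\infty$).

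The core of the argument is the energy estimate. Since $\nabla u_N$ is supported in the annulus $\{R_0\le\rho\le R\}$, where $|\nabla u_N|\le \tfrac{1}{N\log 2}\,\rho^{-1}$, I would decompose this region into the dyadic shells $A_k=\{2^kR_0\le\rho<2^{k+1}R_0\}$, $k=0,\dots,N-1$. On $A_k$ one has $\rho^{-2}\le 4^{-k}R_0^{-2}$, while the growth hypothesis (note $2^{k+1}R_0\ge r_1$) gives $\mathrm{vol}_h(A_k)\le\mathrm{vol}_h(B_{2^{k+1}R_0})\le 4C\,4^kR_0^2$; therefore each shell contributes at most $\int_{A_k}\rho^{-2}e^{-h}\,d\mathrm{vol}\le 4C$, and summing over the $N$ shells yields
\begin{equation*}
\int_M|\nabla u_N|^2 e^{-h}\,d\mathrm{vol}\le\frac{1}{(N\log 2)^2}\sum_{k=0}^{N-1}\int_{A_k}\frac{e^{-h}\,d\mathrm{vol}}{\rho^2}\le\frac{4C}{(\log 2)^2\,N}.
\end{equation*}
Letting $N\to+\infty$ forces $\mathrm{Cap}_h(K,M)=0$, and since $K$ was arbitrary, $M_h$ is $h$-parabolic.

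The only genuinely delicate point is this energy estimate: the whole argument hinges on the fact that the gradient of a logarithmic cutoff decays like $\rho^{-1}$, which is exactly matched by the quadratic growth of $\mathrm{vol}_h$ in the dyadic shells, so that each of the $N$ shells contributes a uniformly bounded amount and the overall energy is $\mathcal{O}(1/N)$. This is precisely the weighted, quadratic-growth instance of Grigor'yan's criterion $\int^{\infty} r\,\mathrm{vol}_h(B_r)^{-1}\,dr=+\infty$; the remaining items (Lipschitz-versus-smooth competitors, $|\nabla\rho|\le1$ a.e., and the passage from vanishing capacity to parabolicity) are standard and cause no difficulty.
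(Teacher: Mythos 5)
Your argument is correct: the logarithmic cutoff supported on $N$ dyadic shells has weighted energy $\mathcal{O}(1/N)$ precisely because the quadratic growth of $\mathrm{vol}_h$ cancels the $\rho^{-2}$ decay of $|\nabla u_N|^2$ shell by shell, so $\mathrm{Cap}_h(K,M)=0$ for every compact $K$ and Proposition~\ref{PropAhlforsStokes}(c) gives $h$-parabolicity. The paper itself offers no proof here — it defers to Grigor'yan \cite{Gr} and \cite{ILPS} — and your argument is exactly the standard capacity proof underlying those references (the quadratic-growth instance of the criterion $\int^{\infty} r\,\mathrm{vol}_h(B_r)^{-1}\,dr=+\infty$), so there is nothing to flag beyond the routine points you already acknowledge (Lipschitz versus smooth competitors, $|\nabla\rho|\le 1$ a.e.).
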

Here we are denoting by $\mathrm{vol}_h\left(B_{r}^M(o)\right)$ the weighted volume of the geodesic ball $B_{r}^M(o)$ with respect to $g_M$ of radius $r$ centered at a reference point 
$o\in M$.

\subsection{Useful facts about translators} In this subsection we will fix some notation and collect some general results about translators that will be used throughout the paper.

First of all, let $x:\Sigma^2\to\mathbb{R}^{3}$ be a translator of the mean curvature flow i.e. satisfying \eqref{Eq_Transl}. Hereafter, $(x_{1},x_{2},x_{3})$ are the standard coordinates of $\mathbb{R}^{3}$ and $(e_{1}, e_{2}, e_{3})$ is the standard orthonormal basis of $\mathbb{R}^{3}$. We will write the standard Euclidean metric on $\mathbb{R}^{3}$ as $g_{\mathbb{R}^{3}}=\delta_{ij}$, but we will also often use the more standard notation $\langle\,,\,\rangle$. Moreover, with a slight abuse of notation, given a translator $x:\Sigma^2\to\mathbb{R}^{3}$, we will use $x_i$ also to denote the restriction on $\Sigma$ of the $i$-th coordinate function $x_i$, that is we will identify $x_i(p)$ with $\langle x(p),e_i\rangle$, for any $p\in\Sigma$. 

The translator equation \eqref{Eq_Transl} turns out to be the Euler-Lagrange equation for the weighted area functional
\[
\mathrm{vol}_{f}(\Sigma)=\int_{\Sigma}e^{-f}d\mathrm{vol}_{\Sigma},
\]
when choosing $f=-x_3$. Clearly, the translator $\Sigma$ inherits the weighted structure of the ambient space $\mathbb{R}^{3}_{f}$. Indeed, we can consider intrinsically the weighted manifold
\[
 \Sigma^2_{\tilde{f}}=\left(\Sigma, x^{*}(\langle\,,\,\rangle), e^{-\tilde{f}}d\mathrm{vol}_{\Sigma}\right),
\]
where $\tilde{f}=f\circ x$. It is customary to drop the ``tilde'' in the weight function and to write $\Sigma_f^{2}$.

In the following lemma we recall a basic identity holding on translators that will be useful in what follows. For a proof we refer to \cite[Lemma 2.1]{IR_MathZ} (using there the opposite sign convention on the Laplacian).
%Furthermore we will set $\eta_{i}=E_{i}^{\flat}$, $E_{i}$ being the projections of $\bar{E}_{i}$ on $\Sigma$.

\begin{lemma}\label{LemBasEq}
Set $f=-x_3$. Then with the $f$-Laplacian as in \eqref{Def:f-Laplacian}:
\[
\Delta_{f} x_i=\langle e_{i},e_{3}\rangle,\quad i=1,2,3.
\]
\end{lemma}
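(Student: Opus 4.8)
The plan is to combine the classical Beltrami formula for an isometric immersion with the translator equation \eqref{Eq_Transl}. First I would recall that for any isometric immersion $x:\Sigma^2\to\mathbb{R}^{3}$ the intrinsic (Laplace--Beltrami) Laplacian of the position vector is the mean curvature vector, $\Delta x=\mathbf{H}$, which is consistent with the convention $\mathbf{H}=\mathrm{tr}(\mathbf{A})$ adopted here, with $\mathbf{A}$ the generalized Hessian of the immersion. Restricting to the $i$-th ambient coordinate this reads $\Delta x_i=\langle \mathbf{H},e_i\rangle$, and then substituting the translator equation $\mathbf{H}=e_3^{\bot}$ gives $\Delta x_i=\langle e_3^{\bot},e_i\rangle$.

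Next I would exploit the ambient orthogonal decomposition $e_3=e_3^{\top}+e_3^{\bot}$ along $\Sigma$, together with the standard identity that the intrinsic gradient of a restricted linear coordinate equals the tangential projection of the corresponding ambient direction; in particular $\nabla x_3=e_3^{\top}$ and $\nabla x_i=e_i^{\top}$. From this, $\langle e_3^{\bot},e_i\rangle=\langle e_3-\nabla x_3,e_i\rangle=\langle e_i,e_3\rangle-\langle \nabla x_3,e_i\rangle$, and since $\nabla x_3$ is tangent we may replace $e_i$ by its tangential part $\nabla x_i$ in the last inner product. This yields the intermediate formula $\Delta x_i=\langle e_i,e_3\rangle-\langle \nabla x_3,\nabla x_i\rangle$.

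Finally I would insert this into the definition of the drifted Laplacian \eqref{Def:f-Laplacian}. With $f=-x_3$ we have $\nabla f=-\nabla x_3$, so
\[
\Delta_f x_i=\Delta x_i-\langle \nabla x_i,\nabla f\rangle=\Delta x_i+\langle \nabla x_i,\nabla x_3\rangle=\langle e_i,e_3\rangle,
\]
the two gradient terms cancelling exactly and leaving precisely the claimed right-hand side. The computation is essentially bookkeeping, so the only genuinely delicate point is the sign convention: one must verify that the conventions $\mathbf{H}=\mathrm{tr}(\mathbf{A})$ and $f=-x_3$ produce $\Delta x=+\mathbf{H}$ and $\nabla f=-\nabla x_3$, so that the drift term \emph{cancels} the tangential piece $e_3^{\top}$ of $e_3$ rather than reinforcing it. This is exactly the sign flip flagged in the reference to \cite{IR_MathZ}, and getting it right is what I expect to be the main (indeed the only) obstacle.
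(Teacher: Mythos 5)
Your argument is correct: the Beltrami identity $\Delta x=\mathbf{H}$, the tangential-projection formula $\nabla x_i=e_i^{\top}$, and the cancellation of the drift term against $\langle\nabla x_3,\nabla x_i\rangle$ are exactly the standard computation, and it is the same one carried out in the reference \cite[Lemma 2.1]{IR_MathZ} to which the paper delegates the proof (modulo the sign convention on the Laplacian that you correctly flag). Nothing is missing.
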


Since throughout the paper we will focus our attention on translators with finite entropy, it is useful to recall the definition of entropy and some related properties.

\begin{definition}[\cite{magman08}, \cite{CoMi}]
Let $\Sigma^n\subseteq \mathbb{R}^{n+k}$ be a submanifold. Given $x_0\in\mathbb{R}^{n+k}$ and $t_0>0$, set
\[
F_{x_0,t_0}(\Sigma)\doteq\frac{1}{(4\pi t_0)^{\frac n2}}\int_\Sigma e^{-\frac{\| x-x_0\|^2}{4 t_o}} d\mathrm{vol}_\Sigma.
\]
The \emph{entropy} $\lambda(\Sigma)$ of $\Sigma$ is then defined as
\[
\lambda(\Sigma)\doteq\sup_{x_0\in\mathbb{R}^{n+k},t_0>0} F_{x_0,t_0}(\Sigma).
\]
\end{definition}

Note that not all translators have finite entropy. It is well known (see \cite{Sun}) that having finite entropy is equivalent to have some control on the extrinsic volume growth. More specifically, a submanifold $\Sigma^n\subseteq\mathbb{R}^{n+k}$ has finite entropy if and only if
\[
\sup_{x\in\mathbb{R}^{n+k}, R>0}\frac{\mathrm{vol}\left(\Sigma\cap B^{\mathbb{R}^{n+k}}_R(x)\right)}{R^n}<+\infty,
\]
where $B^{\mathbb{R}^{n+k}}_R(x)$ is the standard ball in $\mathbb{R}^{n+k}$ of radius $R$ and centered at $x$.

As a consequence of the latter equivalence, we are able to relate the finite entropy assumption to potential-analytic properties of $2$-dimensional translators. More specifically, we have the validity of the following proposition. 

\begin{proposition}\label{Prop_parabolictranslators}
Let $\Sigma^2\subset \mathbb{R}^3$ be a translator. If $\Sigma$ has finite entropy then it is parabolic. Moreover, if $g\in C^{\infty}(\Sigma)$ is any function satisfying $\inf_\Sigma g>-\infty$, then $\Sigma$ is also $g$-parabolic.    
\end{proposition}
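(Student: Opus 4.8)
The plan is to deduce parabolicity of the translator $\Sigma^2$ from the quadratic weighted volume growth criterion in Proposition \ref{Prop_quadparabolicity}, applied in the \emph{unweighted} case $h\equiv 0$, and then to promote this to $g$-parabolicity for any $g$ bounded below using Lemma \ref{LemInfhPar}. First I would recall the extrinsic characterization of finite entropy stated just above: $\Sigma$ has finite entropy if and only if there is a constant $C>0$ with $\mathrm{vol}(\Sigma\cap B_R^{\mathbb{R}^3}(x))\leq C R^2$ for all $x\in\mathbb{R}^3$ and all $R>0$. In particular, fixing a reference point $o\in\Sigma$ and writing $p=x(o)$, this gives
\[
\mathrm{vol}\bigl(\Sigma\cap B_R^{\mathbb{R}^3}(p)\bigr)=\mathcal{O}(R^2),\qquad R\to+\infty.
\]

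The key step is to compare \emph{intrinsic} geodesic balls $B_r^\Sigma(o)$ with \emph{extrinsic} balls. Since the inclusion $x:\Sigma\to\mathbb{R}^3$ is distance non-increasing (the extrinsic distance between two points never exceeds their intrinsic distance along $\Sigma$), every point of the intrinsic ball $B_r^\Sigma(o)$ is mapped into the extrinsic ball $B_r^{\mathbb{R}^3}(p)$ of the same radius, i.e.
\[
x\bigl(B_r^\Sigma(o)\bigr)\subseteq \Sigma\cap B_r^{\mathbb{R}^3}(p).
\]
Hence $\mathrm{vol}(B_r^\Sigma(o))\leq \mathrm{vol}(\Sigma\cap B_r^{\mathbb{R}^3}(p))=\mathcal{O}(r^2)$ as $r\to+\infty$. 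This is exactly the hypothesis of Proposition \ref{Prop_quadparabolicity} with $h\equiv 0$, so $\Sigma$ is (Neumann) parabolic. Note that here completeness of $\Sigma$ is used, which is part of our standing definition of a translator, and that $\Sigma$ is boundaryless so the boundary condition is vacuous.

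Finally, to obtain $g$-parabolicity when $g\in C^\infty(\Sigma)$ satisfies $\inf_\Sigma g>-\infty$, I would invoke Lemma \ref{LemInfhPar} directly: having just shown that $(\Sigma, g_\Sigma)$ is parabolic, that lemma upgrades this to $g$-parabolicity for any smooth weight bounded from below.

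\textbf{Main obstacle.} The only substantive point is the passage from the extrinsic volume bound (which is what finite entropy controls) to a bound on the weighted/intrinsic volume of geodesic balls needed to apply Proposition \ref{Prop_quadparabolicity}. The distance-non-increasing property of the immersion handles this cleanly, and the inclusion $x(B_r^\Sigma(o))\subseteq \Sigma\cap B_r^{\mathbb{R}^3}(p)$ is exactly the comparison required; one should simply be careful that the volume comparison respects the immersion (it does, since $x$ is an isometric immersion and the intrinsic ball injects into the extrinsic one without area distortion). Everything else reduces to citing the two preliminary results already established in the excerpt.
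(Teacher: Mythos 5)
Your proposal is correct and follows essentially the same route as the paper: the intrinsic-to-extrinsic ball comparison (the immersion being distance non-increasing) gives $\mathrm{vol}(B_r^\Sigma(o))\leq \mathrm{vol}(\Sigma\cap B_r^{\mathbb{R}^3}(p))=\mathcal{O}(r^2)$, Proposition \ref{Prop_quadparabolicity} then yields parabolicity, and Lemma \ref{LemInfhPar} upgrades to $g$-parabolicity. The paper additionally notes that the last step can be proven directly by bounding $\mathrm{vol}_g(B_R^\Sigma(o))\leq e^{-\inf_\Sigma g}\,\mathrm{vol}(B_R^\Sigma(o))$ and reapplying the volume-growth criterion, but this is only an alternative to the lemma you already cite.
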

\begin{proof}
Fix a reference point $o\in\Sigma$. Since $\Sigma$ has finite entropy and extrinsic volumes are bigger than intrinsic volumes, it follows that
\[
\mathrm{vol}\left(B^\Sigma_R(o)\right)\leq \mathrm{vol}\left(\Sigma\cap B^{\mathbb{R}^{3}}_R(o)\right)\leq C\lambda(\Sigma) R^2.
\]
The parabolicity is now a direct consequence of Proposition \ref{Prop_quadparabolicity}. Finally, if $g\in C^{\infty}(\Sigma)$ is any function bounded from below, then we conclude by Lemma \ref{LemInfhPar} that $\Sigma$ is also $g$-parabolic. This latter fact can also be proven directly by observing that
\begin{align*}
    \mathrm{vol}_{g}(B_{R}^{\Sigma}(o))&=\int_{B_{R}^{\Sigma}(o)}e^{-g}d\mathrm{vol}_{\Sigma}\leq e^{-\inf_{\Sigma}g}\int_{B_{R}^{\Sigma}(o)}d\mathrm{vol}_{\Sigma}\\&=C\mathrm{vol}(B_{R}^{\Sigma}(o))\leq \tilde{C}R^{2}.
\end{align*}
with $\tilde{C}$ a constant depending on the entropy of $\Sigma$.
\end{proof}

\subsection{Stability for translators}\label{SubSect_Stab}
Translators being critical points for the weighted area functional it makes sense to investigate stability properties. Given a translator $x:\Sigma^m\to\mathbb{R}^{m+1}$ and a compactly supported normal variation $u\nu$, for some $u\in C_{c}^{\infty}(\Sigma)$, the second variation formula for the weighted area functional yields the quadratic form
\[
Q_{f}(u,u)=\int_{\Sigma}\left(|\nabla u|^2-|A|^2u^2\right)e^{-f}d\mathrm{vol}_{\Sigma},
\]
where $A$ is the second fundamental form of the immersion. In particular we say that the translator is $f$-stable if $Q_f(u,u)$ is non-negative for any $u\in C_{c}^{\infty}(\Sigma)$. 

An interesting fact, first observed by T. Ilmanen, is that $x:\Sigma^{m}\to\mathbb{R}^{m+1}$ is a translator if and only if it is minimal in $(\mathbb{R}^{m+1}, \widetilde{\langle\,,\,\rangle}\doteq e^{\frac{2x_{m+1}}{m}}\langle\,,\,\rangle)$. Therefore, letting $\tilde{g}=x^{*}\widetilde{\langle\,,\,\rangle}$, one could also consider stability of $x: (\Sigma^{m},  \tilde{g})\to(\mathbb{R}^{m+1}, \widetilde{\langle\,,\,\rangle})$ as a critical point of the area functional associated to this conformal metric. It turns out that these two notions of stability are equivalent; see \cite[Proposition 14]{CMZ15}.

\section{Rigidity in slabs}\label{Section_RigSlabs}

By Proposition \ref{Prop_parabolictranslators}, Theorem \ref{Thm_RigB} follows from the following formally more general result. While finite entropy thus implies parabolicity, the authors are in turn not aware of any examples of self-translating solitons which are parabolic but not of finite entropy.
\begin{theorem}\label{Thm:RigPar}
Let $\Sigma^2\subseteq \mathbb{R}^{3}$ be a translator which is parabolic and contained in a vertical slab of width $\pi\lambda$ for some $\lambda\in [1, \sqrt{2}]$. If $\Sigma\subseteq\left\{\Big(1-\frac{\lambda^2}{2}\Big)x_3+\alpha x_2\geq c\right\}$ for some $\alpha^2\geq\lambda^2-1$ and $c\in\mathbb{R}$, then necessarily $\lambda=1$, $\alpha=0$, and $\Sigma$ is a grim reaper cylinder $\Gamma\times\mathbb{R}$.
\end{theorem}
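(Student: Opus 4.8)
\section*{Proof proposal for Theorem~\ref{Thm:RigPar}}

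The plan is to prove a weighted analogue of the classical halfspace theorem: I will equip $\Sigma$ with a drifted Laplacian $L=\Delta_h$ that is at once $L$-parabolic and carries a strictly positive, \emph{universally} $L$-superharmonic ambient function $F$ whose level sets are grim-reaper cylinders, and then let parabolicity collapse $F$ to a constant. For the drift I would take $h=\tfrac{2}{\lambda^2}\,\ell$, a positive multiple of the linear functional $\ell:=\big(1-\tfrac{\lambda^2}{2}\big)x_3+\alpha x_2$ from the slanted-halfspace hypothesis. Since $\ell\ge c$ on $\Sigma$ (with $1-\tfrac{\lambda^2}{2}\ge0$ because $\lambda\le\sqrt2$, so the drift points into the prescribed halfspace), $h$ is bounded below; hence by Proposition~\ref{Prop_parabolictranslators} the assumed parabolicity upgrades to $h$-parabolicity, i.e. $L$-parabolicity. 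For the test function I would use
\[
F=e^{(x_3+\alpha x_2)/\lambda^2}\,\sin(x_1/\lambda),
\]
which is strictly positive on the open slab $x_1\in(0,\lambda\pi)$ and whose level sets are the (possibly tilted) grim reapers $\tfrac{1}{\lambda^2}(x_3+\alpha x_2)+\log\sin(x_1/\lambda)=\mathrm{const}$.

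The computational core is to show $LF\le0$ on every translator. Using $\Delta_\Sigma F=\Delta_{\mathbb{R}^3}F-\mathrm{Hess}\,F(\nu,\nu)+\langle\mathbf H,\nabla F\rangle$ together with $\mathbf H=e_3^{\perp}=\nu_3\,\nu$ from \eqref{Eq_Transl}, and writing $a=\tfrac{1}{\lambda^2}$, $b=\tfrac{\alpha}{\lambda^2}$ and $h=p\,x_3+q\,x_2$, one checks that the terms carrying $\cos(x_1/\lambda)$ collect into the indefinite factor $\nu_1\big[(q-2b)\nu_2+(p-2a+1)\nu_3\big]$. The whole point of the choice $h=\tfrac{2}{\lambda^2}\ell$ is that it yields exactly $p=2a-1$ and $q=2b$, killing these cross-terms. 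The surviving coefficient of $\sin(x_1/\lambda)$ is
\[
S(\nu)=a-a^2-b^2-\tfrac{1}{\lambda^2}\big(\nu_2^2+\nu_3^2\big)+(b\nu_2+a\nu_3)^2 .
\]
Setting $\Lambda:=a^2+b^2-\tfrac{1}{\lambda^2}=\tfrac{\alpha^2-(\lambda^2-1)}{\lambda^4}$, the quadratic form $(b\nu_2+a\nu_3)^2-\tfrac{1}{\lambda^2}(\nu_2^2+\nu_3^2)$ has largest eigenvalue $\Lambda$, so $S(\nu)=-\Lambda+\big[(b\nu_2+a\nu_3)^2-\tfrac1{\lambda^2}(\nu_2^2+\nu_3^2)\big]\le-\Lambda+\Lambda(\nu_2^2+\nu_3^2)\le0$, the last step using $\Lambda\ge0$, i.e. precisely the hypothesis $\alpha^2\ge\lambda^2-1$. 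Thus $F$ is universally $L$-superharmonic exactly under this sharp assumption.

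Now $-F$ is $L$-subharmonic and bounded above (as $0<F$), so $L$-parabolicity (Definition~\ref{def_NeumannParabolicity}, applied to $-F$) forces $F\equiv\mathrm{const}>0$; in particular $\Sigma$ lies in the open slab and $S(\nu)\equiv0$ along $\Sigma$. Equality in the estimate requires the maximizing direction $(\nu_2,\nu_3)\parallel(b,a)=(\alpha,1)$, and additionally $\nu_2^2+\nu_3^2=1$ when $\Lambda>0$. This splits into two cases. If $\alpha^2>\lambda^2-1$ then $\Lambda>0$, forcing $\nu_1\equiv0$ and $\nu\equiv\pm(0,\alpha,1)/\sqrt{1+\alpha^2}$ constant; then $\Sigma$ would be a plane with $\nu_3\neq0$ (indeed not even confined to a finite $x_1$-slab), contradicting \eqref{Eq_Transl}, so no such $\Sigma$ exists. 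If $\alpha^2=\lambda^2-1$ then $\Lambda=0$, and equality only demands $(\nu_2,\nu_3)\parallel(\alpha,1)$, i.e. $\nu_2=\alpha\nu_3$ on all of $\Sigma$.

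Finally, the relation $\nu_2=\alpha\nu_3$ means $v=(0,1,-\alpha)$ is everywhere tangent, so $\Sigma$ is a complete cylinder ruled by $v$. Restricting $\ell$ to a ruling gives an affine function of slope $\langle\nabla\ell,v\rangle=\tfrac{\alpha\lambda^2}{2}$, which is unbounded below unless $\alpha=0$; the hypothesis $\ell\ge c$ therefore forces $\alpha=0$, whence $\alpha^2\ge\lambda^2-1$ together with $\lambda\ge1$ gives $\lambda=1$. With $\lambda=1,\alpha=0$ the ruling is $e_2$ and $F=e^{x_3}\sin x_1\equiv\mathrm{const}$ is the grim-reaper profile, so $\Sigma=\Gamma\times\mathbb{R}$. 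The main obstacle I anticipate is the simultaneous construction in the first two paragraphs: superharmonicity wants $h$ slaved to the exponents of $F$ through $p=2a-1,\ q=2b$, whereas parabolicity wants $h$ bounded below, i.e. proportional to $\ell$; reconciling these two demands is exactly what singles out $a=\tfrac1{\lambda^2},\,b=\tfrac{\alpha}{\lambda^2}$ and makes the sign condition $S(\nu)\le0$ coincide with the sharp hypothesis $\alpha^2\ge\lambda^2-1$. A secondary, more geometric difficulty is the borderline case $\Lambda=0$, where the maximum principle yields no strict information and rigidity must instead be recovered from the ruling computation above rather than from superharmonicity alone.
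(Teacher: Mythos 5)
Your proposal is correct and takes essentially the same route as the paper: your $F=e^{(x_3+\alpha x_2)/\lambda^2}\sin(x_1/\lambda)$ is exactly the paper's $e^u$ from \eqref{universal_super}, your drift $h=\tfrac{2}{\lambda^2}\big[(1-\tfrac{\lambda^2}{2})x_3+\alpha x_2\big]$ is the paper's weight $g$, and your coefficient $S(\nu)$ is algebraically identical (using $\nu_1^2+\nu_2^2+\nu_3^2=1$) to the paper's $\tfrac{1}{\lambda^4}\big((\lambda^2-1-\alpha^2)\langle e_1,\nu\rangle^2-(\alpha\langle e_3,\nu\rangle-\langle e_2,\nu\rangle)^2\big)$ in \eqref{Eq_Deltageu}, so the parabolicity-plus-universal-superharmonicity mechanism and the two-case equality analysis coincide. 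The only (minor, equivalent) divergence is the borderline case $\alpha^2=\lambda^2-1$, $\lambda>1$: the paper identifies $\Sigma$ as a tilted grim reaper cylinder and notes it exits the slanted halfspace, whereas you extract the ruling direction $(0,1,-\alpha)$ from $\nu_2=\alpha\nu_3$ and observe that the halfspace functional is unbounded below along the rulings unless $\alpha=0$ --- two phrasings of the same observation.
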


\begin{proof}
Let $x:\Sigma^{2}\to\mathbb{R}^{3}$ be a complete translator which is parabolic and contained in the slab $(0,\lambda\pi)\times \mathbb{R}^{2}$ and assume that $\inf_{\Sigma}\left(\Big(1-\frac{\lambda^2}{2}\Big)x_{3}+\alpha x_2\right)>-\infty$.

Consider the function $u:\Sigma\to\mathbb{R}$ given by
\begin{equation}\label{universal_super}
\ u=\frac{x_{3}}{\lambda^2}+\frac{\alpha}{\lambda^2}x_2+\ln\sin\left(\frac{x_1}{\lambda}\right).
\end{equation}
Note that the (tilted) grim reaper cylinders are level sets of $u$ in \eqref{universal_super} (depending on the parameters $\alpha,\lambda$). Hence, if we are able to prove that the function $u$ is constant, then we are done, by connectedness and completeness of $\Sigma$.

Since $x_{1}\in (0,\lambda\pi)$ we have that $\sin(\frac{x_1}{\lambda})>0$. Moreover
\[
\nabla u=\frac{\nabla x_{3}}{\lambda^2}+\frac{\alpha}{\lambda^2}\nabla x_2+\frac{1}{\lambda}\frac{\cos(x_1/\lambda)}{\sin(x_1/\lambda)}\nabla x_1
\]
and, as a consequence of Lemma \ref{LemBasEq}, for the $f$-Laplacian in \eqref{Def:f-Laplacian}:
\[
\Delta_f u=\frac{1}{\lambda^2}+\Delta_f \ln\sin\left(\frac{x_1}{\lambda}\right)=\frac{1}{\lambda^2}-\frac{1}{\lambda^2}\frac{|\nabla x_1|^2}{\sin^2(x_1/\lambda)}.
\]
Then
\begin{align*}
    \Delta_f e^u&=e^u(\Delta_f u+|\nabla u|^2)\\
    &=\frac{e^u}{\lambda^2}\left(1-\frac{|\nabla x_1|^2}{\sin^2(x_1/\lambda)}+\frac{|\nabla x_3|^2}{\lambda^2}+\frac{\cos^2(x_1/\lambda)}{\sin^2(x_1/\lambda)}|\nabla x_1|^2\right.\\
    &\quad\left. + \frac{\alpha^2}{\lambda^2}|\nabla x_2|^2+\frac{2}{\lambda}\frac{\cos(x_1/\lambda)}{\sin(x_1/\lambda)}\langle \nabla x_1,\nabla(x_3+\alpha x_2)\rangle+\frac{2\alpha}{\lambda^2}\langle \nabla x_3,\nabla x_2\rangle\right).
\end{align*}
On the other hand
\[
\ \frac{2}{\lambda}\frac{\cos(x_1/\lambda)}{\sin(x_1/\lambda)}\langle \nabla x_1,\nabla(x_3+\alpha x_2)\rangle=2\langle \nabla u, \nabla(x_3+\alpha x_2)\rangle-\frac{2}{\lambda^2}|\nabla (x_3+\alpha x_2)|^2.
\]
Hence we deduce that
\begin{align*}
\Delta_f e^u &=\frac{e^{u}}{\lambda^2}\left(1-|\nabla x_1|^2+\frac{|\nabla x_3|^2}{\lambda^2}+\frac{\alpha^2}{\lambda^2}|\nabla x_2|^2+\frac{2\alpha}{\lambda^2}\langle \nabla x_3,\nabla x_2\rangle\right.\\
&\quad\left. -\frac{2}{\lambda^2}|\nabla x_3|^2-\frac{4\alpha}{\lambda^2}\langle \nabla x_3,\nabla x_2\rangle-\frac{2\alpha^2}{\lambda^2}|\nabla x_2|^2\right)\\
&\quad+2\langle \nabla e^u,\nabla \frac{x_3+\alpha x_2}{\lambda^2}\rangle\\
&=\frac{e^{u}}{\lambda^2}\left(1-|\nabla x_1|^2-\frac{|\nabla x_3|^2}{\lambda^2}-\frac{\alpha^2}{\lambda^2}|\nabla x_2|^2-\frac{2\alpha}{\lambda^2}\langle \nabla x_3,\nabla x_2\rangle\right)\\
&\quad+2\langle \nabla e^u,\nabla \frac{x_3+\alpha x_2}{\lambda^2}\rangle.
\end{align*}
Set $g=\frac{2\alpha}{\lambda^2} x_2+\frac{2-\lambda^2}{\lambda^2}x_{3}$. Then, using the fact that $|\nabla x_i|^2=1-\langle e_i,\nu\rangle^2$, it is immediate to see that
\begin{align}
    \Delta_{g}e^u&=e^{g}\mathrm{div}(e^{-g}\nabla e^u)\label{Eq_Deltageu}\\
    &=\frac{e^u}{\lambda^4}\left((\lambda^2-1-\alpha^2)\langle e_{1}, \nu\rangle^2-(\alpha\langle e_3,\nu\rangle -\langle e_{2}, \nu\rangle)^{2}\right)\nonumber.
\end{align}
Note that by assumption $\inf_{\Sigma}g>-\infty$. Hence, Lemma \ref{LemInfhPar} implies that $\Sigma$ is $g$-parabolic and:
\begin{itemize}
\item \underline{\textbf{Case 1}}. If $\alpha^2>\lambda^2-1$ then $\Delta_{g}e^u\leq 0$ and $u$ has to be constant. The constancy of $u$ forces $\langle e_1,\nu\rangle=0$ and $\alpha\langle e_3,\nu\rangle=\langle e_2,\nu\rangle$. Since $H=\langle e_3,\nu\rangle$, using the identity
\[
1=\langle e_1, \nu\rangle^2+\langle e_2, \nu\rangle^2+\langle e_3, \nu\rangle^2,
\]
this would imply that $H$ should be a non-zero constant, which is absurd by the translator equation \eqref{Eq_Transl}.
\item \underline{\textbf{Case 2}}. Let us assume that $\alpha^2=\lambda^2-1$. Then, again $\Delta_{g}e^u\leq 0$ and $u$ has to be constant. In this case either $\lambda=1$ and we conclude that $\Sigma$ must be a grim reaper cylinder, or $\lambda\in (1,\sqrt2]$ (see the proof of Theorem \ref{Thm_RigB} below). However, in this case, the constancy of $u$ implies that $\Sigma$ is a tilted grim reaper cylinder. However any tilted grim reaper cylinder in our slab intersects the planes of the form $\frac{2\alpha}{\lambda^2} x_2+\frac{2-\lambda^2}{\lambda^2}x_{3}=c$.
\end{itemize}
\end{proof}

\begin{remark}
\rm{We remind the reader that the crucial property that $\Delta_{g}e^u\leq 0$, in the above proof, means that $e^u$, with $u$ as in \eqref{universal_super}, is a so-called universally $L$-superharmonic function, i.e. is superharmonic for this operator on any self-translating soliton.}
\end{remark}

\begin{proof}[Proof of Theorem \ref{Thm_RigB}]%\label{RmkOY}
We first focus on the case $\lambda\in[1,\sqrt{2}]$. In this case, as observed before, the assumption of finite entropy implies that $\Sigma$ is parabolic. We can thus conclude as desired by invoking Theorem \ref{Thm:RigPar}.

Concerning the case $\lambda>\sqrt{2}$, note that, as observed in \cite{C20}, the assumption that $\Sigma$ has finite entropy implies that it must be properly immersed. Furthermore, since $|\nabla f|$ is bounded, \cite[Example 1.14]{PRS_Memoirs} guarantees that the Omori-Yau maximum principle for the $f$-Laplacian holds on $\Sigma$. Specifically, for any $C^2$ function $w:\Sigma \to \mathbb{R}$ satisfying $\sup_\Sigma w = w^*<\infty$, there exists a sequence $\{x_n\} \subset \Sigma$ along which the following conditions hold: 
\begin{align*} 
&(i) \lim_{n \to +\infty} w(x_n)= w^*, \\ 
&(ii)\ |\nabla w(x_n)| < \frac{1}{n}, \\ 
&(iii)\ \Delta_f w(x_n) < \frac{1}{n}. 
\end{align*} 
Now, since we are assuming $\lambda > \sqrt{2}$, the condition $\Sigma \subseteq \left\{ \left( 1 - \frac{\lambda^2}{2} \right) x_3 + \alpha x_2 \geq c \right\}$ implies that the function $\left( \frac{\lambda^2}{2} - 1 \right) x_3 - \alpha x_2$ is bounded from above. We can then apply the Omori-Yau maximum principle to the function $w = \left( 1 - \frac{\lambda^2}{2} \right) x_3 + \alpha x_2$, obtaining
$$\frac1n>\Delta_f w(x_n)=\frac{\lambda^2}{2}-1>0.$$
Taking the limit as $n\to+\infty$ one immediately reaches a contradiction.

\end{proof}

\section{Rigidity of collapsed translators with prescribed boundary}\label{Section_RigPrescrBnd}

\begin{theorem}\label{Thm_RigStraightLine}
Let $\Sigma^2\subseteq \mathbb{R}^{3}$ be a translator with non-empty boundary which is (Neumann) parabolic and contained in a vertical slab of width $\omega$. If $\Sigma\subseteq \left\{x_{3}\leq a\right\}$ and $\partial\Sigma$ is contained in the straight line $\left\{x_{1}=c\right\}\cap\left\{x_{3}=a\right\}$ for some $a,c\in\mathbb{R}$. Then $\Sigma$ is a piece of a vertical plane parallel to the slab.
\end{theorem}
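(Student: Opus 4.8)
The plan is to reduce the statement to showing that the first coordinate function is constant on $\Sigma$, and then to exploit that $x_1$ is harmonic for the drifted Laplacian together with the assumed parabolicity. Up to a rotation fixing the $x_3$-axis (which preserves the translator equation \eqref{Eq_Transl}), we may assume the vertical slab is $\{0 < x_1 < \omega\}$, so that the vertical planes parallel to the slab are exactly the level sets $\{x_1 = \mathrm{const}\}$ and the boundary line $\{x_1 = c\}\cap\{x_3 = a\}$ is parallel to $e_2$. With this normalization it suffices to prove that $x_1 \equiv c$ on $\Sigma$; indeed this forces $\Sigma \subseteq \{x_1 = c\}$, and since $\Sigma$ is a connected smooth surface with boundary contained in this plane, it is a piece of a vertical plane parallel to the slab.

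First I would record the two structural facts. Since $\Sigma$ lies in the slab, $x_1$ is bounded, with $0 \leq x_1 \leq \omega$. By Lemma \ref{LemBasEq}, $\Delta_f x_1 = \langle e_1, e_3\rangle = 0$, so $x_1$ is $f$-harmonic for $f = -x_3$. Next, the confinement $\Sigma \subseteq \{x_3 \leq a\}$ gives $\inf_\Sigma f = -\sup_\Sigma x_3 \geq -a > -\infty$; hence, by Lemma \ref{LemInfhPar}, the assumed (unweighted, Neumann) parabolicity of $\Sigma$ upgrades to Neumann $f$-parabolicity.

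To conclude, I would apply the maximum-principle form of parabolicity (the final, $D = M$, assertion of Proposition \ref{PropAhlforsStokes}) to the function $(x_1 - c)^2$. This function is bounded above, vanishes on $\partial\Sigma \subseteq \{x_1 = c\}$, and is $f$-subharmonic because
\[
\Delta_f (x_1 - c)^2 = 2(x_1 - c)\,\Delta_f x_1 + 2|\nabla x_1|^2 = 2|\nabla x_1|^2 \geq 0.
\]
Therefore $\sup_\Sigma (x_1 - c)^2 = \sup_{\partial\Sigma}(x_1-c)^2 = 0$, which yields $x_1 \equiv c$. (Equivalently, one may apply the maximum principle to $x_1$ and to $-x_1$ separately, obtaining $\sup_\Sigma x_1 = c = \inf_\Sigma x_1$.) From $|\nabla x_1|^2 = 1 - \langle e_1, \nu\rangle^2 \equiv 0$ one then reads off a posteriori that $\nu = \pm e_1$, confirming the planarity.

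The argument is short and essentially formal; the one point that requires care is the passage from the hypothesis (parabolicity for the plain surface Laplacian) to the weighted $f$-parabolicity actually used, which is precisely where the one-sided bound $\{x_3 \leq a\}$ enters, via $\inf_\Sigma f > -\infty$ and Lemma \ref{LemInfhPar}. A second small subtlety is that the boundary data here is of Dirichlet type ($x_1 = c$ on $\partial\Sigma$), so rather than the Neumann condition in Definition \ref{def_NeumannParabolicity} one must invoke the equivalent maximum-principle characterization $\sup_\Sigma u = \sup_{\partial\Sigma} u$ for bounded $f$-subharmonic functions.
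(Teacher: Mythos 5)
Your argument is correct and follows essentially the same route as the paper's proof: $\Delta_f x_1=0$ from Lemma \ref{LemBasEq}, the upgrade from Neumann parabolicity to $f$-parabolicity via $\inf_\Sigma(-x_3)>-\infty$ and Lemma \ref{LemInfhPar}, and then the Ahlfors-type maximum principle of Proposition \ref{PropAhlforsStokes} to force $x_1\equiv c$. The only (immaterial) difference is that you apply the maximum principle to $(x_1-c)^2$ where the paper applies it to $x_1$ and $-x_1$ separately.
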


\begin{proof}
As a consequence of Lemma \ref{LemBasEq} and the slab assumption, the function $x_1$ satisfies:
\[
\begin{cases}
\Delta_f x_1=0,\\
\sup_\Sigma |x_1|<+\infty.
\end{cases}
\]
On the other hand, since, by assumption, $\inf_\Sigma f=\inf_\Sigma (-x_3)>-\infty$ and $\Sigma$ is parabolic, then $\Sigma$ is also $f$-parabolic, as a consequence of Lemma \ref{LemInfhPar}. As a consequence of Proposition \ref{PropAhlforsStokes} one then has
\[
\sup_{\Sigma} x_1=\sup_{\partial \Sigma} x_1=c,\quad \inf_\Sigma x_1=\inf_{\partial \Sigma} x_1=c
\]
In particular, $x_1\equiv c$ on $\Sigma$ and hence $\Sigma$ must be a piece of the plane $\left\{x_1=c\right\}$.
\end{proof}

Note that if $\Sigma$ is a complete translator of finite entropy, then, as a consequence of Proposition \ref{Prop_quadparabolicity}, it is parabolic for the surface Laplacian. Hence, the capacitary description of parabolicity (see e.g. Proposition \ref{PropAhlforsStokes} above) implies that, given a regular value $a$ of $x_3$, the sublevel set $\Sigma_{a}\doteq\{p\in\Sigma\,:\,x_3(p)\leq a\}$ is Neumann parabolic. Indeed, given any compact set $K\subset\Sigma_a$, every $\psi\in C_{c}^{\infty}(\Sigma)$ with $\psi\geq 1$ on $K$ gives rise to a test function $\varphi=\left.\psi\right|_{\Sigma_{a}}$ for $\mathrm{Cap}(K,\Sigma_a)$. Theorem \ref{Thm_RigPrescrSlice} follows now directly from Theorem \ref{Thm_RigStraightLine} . 

\appendix

\section{Equidistant surfaces to translators with respect to the conformal metric}\label{Equidist}
\subsection{Geodesics of $(\mathbb{R}^{3}, \widetilde{\langle\,,\,\rangle}$)}
Consider on $\mathbb{R}^{3}$ the conformal metric $\widetilde{\langle\,,\rangle}=e^{x_{3}}\langle\,,\,\rangle$. We recall that $\Sigma^{2}$ is a translator in $(\mathbb{R}^{3}, \langle\,,\,\rangle)$ if and only if it is minimal in $(\mathbb{R}^{3}, \widetilde{\langle\,,\,\rangle})$.
\medskip

The Christoffel symbols of $(\mathbb{R}^{3}, \widetilde{\langle\,,\,\rangle})$ are given by
\[
\ \Gamma_{ij}^{k}=\partial_{i}\frac{x_{3}}{2}\delta_{j}^{k}+\partial_{j}\frac{x_{3}}{2}\delta_{i}^{k}-\partial_{k}\frac{x_{3}}{2}\delta_{ij}.
\]
In particular, the non-vanishing ones are
\begin{align*}
    \begin{cases}
    \Gamma_{13}^{1}=\Gamma_{23}^{2}=\Gamma_{31}^{1}=\Gamma_{32}^{2}=\Gamma_{33}^{3}=\frac{1}{2},\\
    \Gamma_{11}^{3}=\Gamma_{22}^{3}=-\frac{1}{2}.
    \end{cases}
\end{align*}
A curve $\gamma=(\gamma_{1}, \gamma_{2}, \gamma_{3})$ in $\mathbb{R}^{3}$ is a geodesic in $(\mathbb{R}^{3}, \widetilde{\langle\,,\,\rangle})$ if and only if its component functions satisfy the geodesic equations
\begin{equation}\label{Eq_Geod}
 \begin{cases}
 \ddot\gamma_{1}+\dot{\gamma}_{1}\dot{\gamma}_{3}=0\\
 \ddot{\gamma}_{2}+\dot{\gamma}_{2}\dot{\gamma}_{3}=0\\
 \ddot{\gamma}_{3}+\frac{1}{2}\left\{(\dot{\gamma}_{3})^{2}-(\dot{\gamma}_{1})^{2}-(\dot{\gamma}_{2})^{2}\right\}=0.
 \end{cases}   
\end{equation}
Let $p\in \mathbb{R}^{3}$ and $v\in\mathbb{R}^{3}$ such that $\widetilde{\langle v, v\rangle}=1$ and let us denote by $\gamma$ the unique geodesic of $(\mathbb{R}^{3}, \widetilde{\langle\,,\,\rangle})$ satisfying $\gamma(0)=p$, $\dot{\gamma}(0)=v$. Then
\begin{equation}\label{Eq_dotGamma}
\dot{\gamma}_{1}^{2}+\dot{\gamma}_{2}^{2}=e^{-\gamma_{3}}-\dot{\gamma}_{3}^{2}.
\end{equation}
Hence the third equation in \eqref{Eq_Geod} reads as
\[
(e^{\gamma_{3}}\dot{\gamma}_{3})^\cdot=\frac12.
\]
Multiplying by $e^{\gamma_{3}}$ on both sides and integrating between $0$ and $t$, gives
\[
\ \gamma_{3}(t)=\ln\left(\frac{t^{2}+4\dot{\gamma}_{3}(0)e^{\gamma_{3}(0)}t+4e^{\gamma_{3}(0)}}{4}\right).
\]
Note that this expression is defined for all $t>0$.
\medskip

We now integrate the first and the second equation in \eqref{Eq_Geod}. For $i=1,2$ we have that
\begin{align*}
    \dot{\gamma}_{i}(t)=&\dot{\gamma}_{i}(0)e^{-\gamma_{3}(t)+\gamma_{3}(0)}=\frac{4\dot{\gamma}_{i}(0)e^{\gamma_{{3}(0)}}}{t^{2}+4\dot{\gamma}_{3}(0)e^{\gamma_{3}(0)}t+4e^{\gamma_{3}(0)}}\,\\
    \gamma_{i}(t)=&\gamma_{i}(0)+4\dot{\gamma}_{i}(0)e^{\gamma_{3}(0)}\int_{0}^{t}\frac{du}{u^2+4\dot{\gamma_{3}}(0)e^{\gamma_{3}(0)}u+4e^{\gamma_{3}(0)}}\,
\end{align*}
Note that
\begin{align*}
    &\int_{0}^{t}\frac{du}{u^2+4\dot{\gamma}_{3}(0)e^{\gamma_{3}(0)}u+4e^{\gamma_{3}(0)}}=\int_{0}^{t}\frac{du}{(u+2\dot{\gamma}_{3}(0)e^{\gamma_{3}(0)})^2+4e^{\gamma_{3}(0)}(1-\dot{\gamma}_{3}(0)^2e^{\gamma_{3}(0)})}\\
    =&\frac{e^{-\frac{\gamma_{3}(0)}{2}}}{2\sqrt{1-\dot{\gamma}_{3}(0)^2e^{\gamma_{3}(0)}}}\int_{0}^{t}du\left[\frac{1}{2e^{\frac{\gamma_{3}(0)}{2}}\sqrt{1-\dot{\gamma}_{3}(0)^{2}e^{\gamma_{3}(0)}}}\right]\left[\left(\frac{u+2\dot{\gamma}_{3}(0)e^{\gamma_{3}(0)}}{2e^{\frac{\gamma_{3}(0)}{2}}\sqrt{1-\dot{\gamma}_{3}(0)^2e^{\gamma_{3}(0)}}}\right)^2+1\right]^{-1}\\
    =&\frac{1}{2e^{\gamma_{3}(0)}\sqrt{\dot{\gamma}_{1}(0)^2+\dot{\gamma}_{2}(0)^2}}\left[\arctan\left(\frac{t+2\dot{\gamma}_{3}(0)e^{\gamma_{3}(0)}}{2e^{\gamma_{3}(0)}\sqrt{\dot{\gamma}_{1}(0)^2+\dot{\gamma}_{2}(0)^2}}\right)-\arctan\left(\frac{\dot{\gamma}_{3}(0)}{\sqrt{\dot{\gamma}_{1}^{2}(0)+\dot{\gamma}_{2}^{2}(0)}}\right)\right],
\end{align*}
where in the third identity we have used \eqref{Eq_dotGamma}. Hence, recalling that
\[
\ \arctan t_{1}-\arctan{t_{2}}=\arctan\left(\frac{t_{1}-t_{2}}{1+t_{1}t_{2}}\right),
\]
a simple computation yields
\[
\ \gamma_{i}(t)=\gamma_{i}(0)+2\frac{\dot{\gamma}_{i}(0)}{\sqrt{\dot{\gamma}_{1}(0)+\dot{\gamma}_{2}(0)}}\arctan\left(\frac{t\sqrt{\dot{\gamma}_{1}(0)+\dot{\gamma}_{2}(0)}}{t\dot{\gamma}_{3}(0)+2}\right).
\]
Summarising, the geodesic $\gamma$ can be parametrized by
\begin{align*}
    \gamma_{1}(t)=&\gamma_{1}(0)+2\frac{\dot{\gamma}_{1}(0)}{\sqrt{\dot{\gamma}_{1}(0)+\dot{\gamma}_{2}(0)}}\arctan\left(\frac{t\sqrt{\dot{\gamma}_{1}(0)+\dot{\gamma}_{2}(0)}}{t\dot{\gamma}_{3}(0)+2}\right)\\
    \gamma_{2}(t)=&\gamma_{2}(0)+2\frac{\dot{\gamma}_{2}(0)}{\sqrt{\dot{\gamma}_{1}(0)+\dot{\gamma}_{2}(0)}}\arctan\left(\frac{t\sqrt{\dot{\gamma}_{1}(0)+\dot{\gamma}_{2}(0)}}{t\dot{\gamma}_{3}(0)+2}\right)\\
    \gamma_{3}(t)=&\ln\left(\frac{t^{2}+4\dot{\gamma}_{3}(0)e^{\gamma_{3}(0)}t+4e^{\gamma_{3}(0)}}{4}\right).
\end{align*}

\subsection{$\widetilde{\langle\,,\,\rangle}$-equidistant surfaces to translators} Let $\Sigma^2$ be a translator in $\mathbb{R}^{3}$, $p=(p_{1}, p_{2}, p_{3})\in \Sigma^2$ and denote by $\nu=(\nu_{1}, \nu_{2}, \nu_{3})$ its unit normal. Consider the geodesic $\gamma:\mathbb{R}\to(\mathbb{R}^{3}, \widetilde{\langle\,,\,\rangle})$ with initial conditions $\gamma(0)=p$ and $\dot{\gamma}(0)=e^{-p_{3}/2}\nu$. By the translator equation \eqref{Eq_Transl} we have that 
\[
\ \dot{\gamma}_{3}(0)=H(p)e^{-p_{3}/2},
\]
where $H(p)$ denotes the mean curvature of $\Sigma$ evaluated at the point $p$.
Therefore
\[
\ \dot{\gamma}_{1}(0)^{2}+\dot{\gamma}_{2}(0)^{2}=e^{-p_{3}}(1-H(p)^2),
\]
so that
\[
\ \frac{\dot{\gamma}_{1}(0)}{\sqrt{\dot{\gamma}_{1}(0)^2+\dot{\gamma}_{2}(0)^{2}}}=\frac{\nu_{1}}{\sqrt{1-H(p)^2}}.
\]
Substituting in \eqref{Eq_Geod}, we therefore obtain that the surface at distance $t$ from $\Sigma$ in the direction of $\dot{\gamma}(0)=e^{-p_{3}/2}\nu$ is parametrized by
\begin{equation}\label{Eq_EquidistToTransl}
\begin{cases}
x_{1,t}=p_{1}+\frac{2\nu_{1}}{\sqrt{1-H^2}}\arctan\left(\frac{t\sqrt{1-H^2}}{tH+2e^{p_3/2}}\right)\\
x_{2,t}=p_{2}+\frac{2\nu_{2}}{\sqrt{1-H^2}}\arctan\left(\frac{t\sqrt{1-H^2}}{tH+2e^{p_3/2}}\right)\\
x_{3,t}=\ln\left(\frac{t^{2}+4He^{p_{3}/2}t+4e^{p_{3}}}{4}\right).
\end{cases}
\end{equation}

To analyze a particular situation in detail, we will now focus on the simpler special case of $H\equiv 0$, considering the plane given by
\[
\Sigma=\Sigma_{0}\doteq\left\{(0, x_{2}, x_{3})\in\mathbb{R}^{3}: x_{2}, x_{3}\in\mathbb{R}\right\}.
\]
By \eqref{Eq_EquidistToTransl}, for any $t\in\mathbb{R}$ the equidistant surface $\Sigma_{t}\subseteq\mathbb{R}^{3}$ at (signed) distance $t$ from $\Sigma_{0}$ is parametrized by
\begin{equation*}
    \begin{cases}
    x_{1,t}=2\arctan\left(\frac{te^{-x_{3}/2}}{2}\right)\\
    x_{2,t}=x_2\\    x_{3,t}=\ln\left(\frac{t^{2}+4e^{x_{3}}}{4}\right).
    \end{cases}
\end{equation*}
\medskip

Let us study some features of the family of equidistant surfaces $\left\{\Sigma_{t}\right\}_{t\in\mathbb{R}}$:
\begin{itemize}
\item[(i)]$\left\{\Sigma_{t}\right\}_{t\in\mathbb{R}}$ foliate $(-\pi,\pi)\times\mathbb{R}\times\mathbb{R}$.
\item[(ii)] Let us focus here on the case $t>0$. Setting $x_1=2\arctan\left(\frac{te^{-x_{3}/2}}{2}\right)$ we can reparametrize $\Sigma_{t}$ as follows:
\[
\ \left\{\left(x_1,x_2,\ln\left(\frac{t^{2}}{4}\right)-2\ln\sin\left(\frac{x_1}{2}\right)\right): (x_1,x_2)\in(0,\pi)\times\mathbb{R}\right\}.
\]
Therefore the unit normal to $\Sigma_{t}$ is
\[
\ \nu_{t}=\left(\cos\left(\frac{x_1}{2}\right), 0, \sin\left(\frac{x_1}{2}\right)\right)
\]
and the first and the second fundamental form of $\Sigma_{t}$ are given by
\[
\ g_{\Sigma_{t}}=\left(\begin{array}{cc}\frac{1}{\sin^{2}(x_1/2)}&0\\0&1\end{array}\right),\qquad II_{\Sigma_{t}}=\left(\begin{array}{cc}\sin^{2}(x_1/2)&0\\0&1\end{array}\right)\left(\begin{array}{cc}\frac{1}{2\sin(x/2)}&0\\0&0\end{array}\right).
\]
In particular,
\[
\ H_{\Sigma_{t}}=\frac{1}{2}\sin(x_1/2)=\frac{1}{2}\langle e_{3}, \nu_{\Sigma_{t}}\rangle.
\]
Note that any $\Sigma_{t}$ looks like an open piece of a rescaled (with a factor $2$) grim reaper cylinder. In particular $\Sigma_{t}$ is not a unit-speed translator.
\end{itemize}

%\bibliographystyle{plain}
%\bibliographystyle{alpha}
%\bibliography{references}
%\bibliography{sample}
%\printbibliography

\end{document}